\documentclass[11pt,a4paper]{amsart}
\usepackage{amsfonts}
\usepackage{amsmath}
\usepackage{amssymb}
\usepackage{array}
\usepackage{amsthm}
\usepackage{subfigure}
\usepackage{mathdots}
\usepackage{algorithm}
\usepackage[table,x11names]{xcolor}
\usepackage[noend]{algpseudocode}
\usepackage{graphicx}
\usepackage{color}
\usepackage{pgf}%,pgfarrows,pgfnodes} % pgfarrows, pgfnodes obsolete
\usepackage{scrpage2}
\usepackage{multirow}
\usepackage{listings} %Quellcode einbinden
  \lstset{language=matlab,showstringspaces=false,basicstyle={\ttfamily}}%Matlab Quellcode
\usepackage{tikz}
\usepackage{graphicx}
 \usepackage[foot]{amsaddr}
 		
\newcommand{\N}{\ensuremath{\mathbb{N}}}

\newcommand{\R}{\ensuremath{\mathbb{R}}}

\newcommand{\EE}{\mathbb{E}}

\newcommand{\norm}[1]{\left\Vert #1\right\Vert}

\DeclareMathOperator*{\range}{range}

\DeclareMathOperator*{\trace}{trace}

\DeclareMathOperator*{\Var}{Var}

\DeclareMathOperator{\G}{\mathcal{G}}

\DeclareMathOperator{\Pol}{Pol}

\DeclareMathOperator*{\F}{F}
\DeclareMathOperator{\MSE}{MSE}
\DeclareMathOperator*{\PCA}{PCA}

\DeclareMathOperator{\tvar}{tvar}

\DeclareMathOperator{\Cov}{Cov}

\DeclareMathOperator{\Corr}{Corr}

\setcounter{totalnumber}{4}

\newtheorem{theorem}{Theorem}[section]
\newtheorem{lemma}[theorem]{Lemma}
\newtheorem{remark}[theorem]{Remark}
\newtheorem{definition}[theorem]{Definition}
\newtheorem{example}[theorem]{Example}
\newtheorem{corollary}[theorem]{Corollary}
\newtheorem{proposition}[theorem]{Proposition}

 %aendert die Label

%\renewcommand{\theequation}{\arabic{section}.\arabic{equation}}
%\parindent0ex

\numberwithin{equation}{section}
\numberwithin{table}{section}
\numberwithin{figure}{section}

\newcommand{\bend}{\hspace*{0ex} \hfill \hbox{\vrule height
    1.5ex\vbox{\hrule width 1.4ex \vskip 1.4ex\hrule  width 1.4ex}\vrule
    height 1.5ex}}

%\long\def\symbolfootnote[#1]#2{\begingroup%
%\def\thefootnote{\fnsymbol{footnote}}\footnote[#1]{#2}\endgroup}
%==========================

 % "\AA" already defined

\renewcommand{\mathbf}[1]{\ensuremath{\boldsymbol{#1}}}

\newcommand{\rank}{ \operatorname{rank}}

\allowdisplaybreaks

%==================================
\title[Orthogonal projections for dimension reduction]{On orthogonal projections for dimension reduction and applications in augmented target loss functions for learning problems} 

\date{\today}
\date{}
%\author[A.~Breger et al.]{A.~Breger, J.~I.~Orlando, P.~Harar, M.~D\"orfler, S.~Klimscha, C.~Grechenig, B.~S.~Gerendas, U.~Schmidt-Erfurth,  M.~Ehler}
\author{A.~ Breger$^1$}
\author{J.~I.~Orlando$^2$}
\author{P.~Harar$^3$}
\author{M.~D\"orfler$^1$}
\author{S.~Klimscha$^2$}
\author{C.~Grechenig$^2$}
\author{B.~S.~Gerendas$^{1,2}$}
\author{U.~Schmidt-Erfurth$^2$}
\author{M.~Ehler$^1$}
\address[1]{University of Vienna, Department of Mathematics, Vienna, Austria}
\address[2]{Medical University of Vienna, Department of Ophthalmology, Vienna, Austria}
\address[3]{Brno University of Technology, Department of Telecommunications, Brno, Czech Republic}

%\affil[2]{Both on a bus}

%\address[A.~Breger]{University of Vienna,
%Department of Mathematics,
%Oskar-Morgenstern-Platz 1, 
%A-1090 Vienna
%}
%\email{anna.breger@univie.ac.at}
%
%\author[M.~Ehler]{Martin Ehler}
%\address[M.~Ehler]{University of Vienna,
%Department of Mathematics,
%Oskar-Morgenstern-Platz 1, 
%A-1090 Vienna
%}
%\email{martin.ehler@univie.ac.at}
%

% \newcommand{\todo}[1]{\colorbox{yellow}{\textcolor{red}{\txtbf{TODO:} #1}}}

%===============================================================================
\begin{document}

\begin{abstract}

The use of orthogonal projections on high-dimensional input and target data in learning frameworks is studied. First, we investigate the relations between two standard objectives in dimension reduction, preservation of variance and of pairwise relative distances. Investigations of their asymptotic correlation as well as numerical experiments show that a projection does usually not satisfy both objectives at once. In a standard classification problem we determine projections on the input data that balance the objectives and compare subsequent results. Next, we extend our application of orthogonal projections to deep learning tasks and introduce a general framework of augmented target loss functions. These loss functions integrate additional information via transformations and projections of the target data. In two supervised learning problems, clinical image segmentation and music information classification, the application of our proposed augmented target loss functions increase the accuracy.

\end{abstract}

\maketitle
\section{Introduction}
Linear dimension reduction is commonly used for preprocessing of high-dimensional data in complicated learning frameworks to compress and weight important data features. In contrast to nonlinear approaches, the use of orthogonal projections is computationally cheap, since it corresponds to a simple matrix multiplication. Conventional approaches apply specific projections that preserve essential information and complexity within a more compact representation. The projector is usually selected by optimizing distinct objectives, such as information preservation of the sample variance or of pairwise relative distances. Widely used orthogonal projections for dimension reduction are variants of the principal component analysis (PCA) that maximize the variance of the projected data, \cite{pca}. Preservation of relative pairwise distances asks for a near-isometric embedding, and random projections guarantee this embeddings with high probability, cf.~\cite{Dasgupta:2003fk,Baraniuk:2006aa} and see also \cite{Achlioptas:2003wo,Matousek:2008al,Baraniuk:2008fk,Candes:2005vn,Krahmer:2011kx,numax}. The use of random projections is especially favorable for large, high-dimensional data (\cite{Thanei2017}), since the computational complexity is just $O(dkm)$, e.g. using the construction in \cite{Achlioptas:2003wo}, with $d, k \in \N$ being the original and lower dimensions and $m \in \N$ the number of samples. In contrast, PCA needs $O(d^2m)+O(d^3)$ operations (\cite{Golub:1996fk}). Moreover, tasks that do not have all data available at once, e.g. data streaming, ask for dimension reduction methods that are independent of the data. 

In the present manuscript, we study orthogonal projections regarding the interplay between
\begin{itemize}
\item[O1)] preservation of variance,
\item[O2)] preservation of pairwise relative distances,
\end{itemize}
aiming for a sufficient lower-dimensional data representation. We shall consider the Euclidean distance exclusively since it is most widely used in applications, especially for error estimation. On manifolds, the geodesic distance is locally equivalent to the Euclidean distance. The two objectives O1) and O2) are directly addressed by PCA (O1) and random projections (O2). We achieve the following goals: first we clarify mathematically and numerically that the two objectives are competing, i.e. PCA and random projections preserve different kinds of information. Depending on the objectives we discuss beneficial choices of orthogonal projections and numerically find a balancing projector for a given data set. Finally, we define a general framework of augmented target (AT) loss functions for deep neural networks, that integrate information about target characteristics via features and projections. We observe that our proposed methodology can increase the accuracy in two deep learning problems.

\smallskip
In contrast to conventional approaches we study the joint behavior of the two objectives with respect to the entire set of orthogonal projectors. By analyzing the correlation between the variance and pairwise relative distances of projected data, we observe that O1) and O2) are competing and usually cannot be reached at the same time. In numerical learning experiments we investigate heuristic choices of projections applied to input features, for subsequent classification with support vector machine and shallow neural networks. 
%To simplify the numerical investigations, we use the concept of $t$-designs, cf.~\cite{Bachoc:2004fk,Bachoc:2010aa,Breger:2016rc}, as representatives of the full set of orthogonal projectors. 

In view of learning frameworks, we utilize features and projections on target data. The class of augmented target loss functions incorporates suitable transformations and projections that provide beneficial representations of the target space. It is applied in two supervised deep learning problems dealing with real world data. 

The first experiment is a clinical image segmentation problem in optical coherence tomography (OCT) data of the human retina. Related principles of dimension reduction for other clinical classification problems in OCT have already been successfully applied in \cite{A.-Breger:2017bq}. In the second experiment we aim to categorize musical instruments based on their spectrogram, see \cite{Dorfler:2018xz} for related results. Our utilized augmented target loss functions can increase the accuracy in both experiments. 

\smallskip
The outline is as follows. In Section \ref{sec:2} we address the analysis of the competing objectives and Theorem \ref{theorem} yields the asymptotic correlation between variance and pairwise relative distances of projected data. Section \ref{sec:3} prepares for the numerical investigations by recalling $t$-designs as considered in \cite{Breger:2016vn}, enabling subsequent numerics. Heuristic investigations on projected input used in a straightforward classification task, are presented in Section \ref{sec:4}. Our framework of augmented target loss functions as modified standard loss functions for deep learning, is introduced in Section \ref{general}. Finally, in Sections \ref{sec:appl retina} and \ref{sec:appl music} we present classification experiments on OCT images and musical instruments using aligned augmented target loss functions.

%To study the competing nature of projected variance and near isometric embeddings, we shall \ldots

%\part{Theoretical foundations of orthogonal projections used for dimension reduction}

\section{Dimension reduction with orthogonal projections}\label{sec:2}
To reduce the dimension of a high-dimensional data set $x=\{x_i\}_{i=1}^m\subset\R^d$, we map $x$ into a lower-dimensional affine linear subspace $\bar{x}+V$, where $\bar{x}:=\frac{1}{m}\sum_{i=1}^m x_i$ is the sample mean and $V$ is a $k$-dimensional linear subspace of $\R^d$ with $k < d$. This mapping is performed by an orthogonal projector $p\in \G_{k,d}$, where 
\begin{equation*}
\G_{k,d}:= \{p\in\R^{d\times d} : p^2=p,\; p^\top\!=p,\; \rank(p)=k\}
\end{equation*}
denotes the Grassmannian, so that the lower-dimensional data representation is 
\begin{equation}\label{eq:11}
\{\bar{x}+p(x_i-\bar{x})\}_{i=1}^m \subset \bar{x}+V, 
\end{equation}
with $\range(p)=V$. A suitable choice of $p$ within $\G_{k,d}$ depends on further objectives, i.e. which kind of information preservation shall be favored for subsequent analysis tasks. In the following, we consider two objectives associated to popular choices of orthogonal projectors for dimension reduction, in particular, random projectors from $\mathcal{G}_{k,d}$ and PCA. We will first observe that the two objectives are competing, especially in high dimensions, and then discuss consequences.  

\subsection{Objective O1)}
%The first property we study is the behavior of the total sample variance of projected data. When reducing the dimension it is favorable to preserve as much variance as possible. 
The total sample variance\footnote[1]{We use lower case letters for samples and upper case letters for random vectors/matrices.} $\tvar(x)$ of $x=\{x_i\}_{i=1}^m\subset\R^d$ is the sum of the corrected variances along each dimension, \begin{equation}\label{eq:total variance}
\tvar(x) :=\frac{1}{m-1}\sum_{i=1}^m \|x_i-\bar{x}\|^2.
\end{equation}
PCA aims to construct $p\in\G_{k,d}$, such that the total sample variance of \eqref{eq:11} is maximized among all projectors in $\G_{k,d}$. For other equivalent optimality criteria, we refer to \cite{udell2015thesis}.

The total sample variance of $px=\{px_i\}_{i=1}^m\subset V$ coincides with the one of \eqref{eq:11} and satisfies
\begin{equation*}
\tvar(px)\leq\tvar(x)
\end{equation*}
for all $p \in \G_{k,d}$. Thus, PCA achieves optimal variance preservation. The total variance \eqref{eq:total variance} can also be expressed via pairwise absolute distances
\begin{align}
\tvar(x) &=\frac{1}{m(m-1)} \sum_{i<j} \norm{x_i - x_j}^2 \label{tvarx}. 
\intertext{Equally, it holds that} 
\tvar(px) &= \frac{1}{m(m-1)} \sum_{i<j} \norm{p(x_i) - p(x_j)}^2 \label{tvarpx}, 
\end{align}
which reveals that PCA maximizes the sample mean of the projected pairwise absolute distances. 

\subsection{Objective O2)}\label{JLsec}
In contrast to pairwise absolute distances, the Johnson-Lindenstrauss Lemma targets the global property of preservation of pairwise relative distances:
\begin{lemma}[Johnson-Lindenstrauss, cf.~\cite{Dasgupta:2003fk,Matousek:2008al}]\label{lemma JL}
For any $0 < \epsilon < 1$, any $k \leq d, m \in\N$,  with 
\begin{equation*}
\frac{4 \log(m)}{\epsilon^2/2 -\epsilon^3/3}\leq k,
\end{equation*}
and any set $\{x_i\}_{i=1}^m\subset\R^d$, there is a projector $p\in\G_{k,d}$ such that
\begin{equation}\label{JLeq}
(1 - \epsilon) \norm{x_i- x_j}^2 \leq \tfrac{d}{k}\norm{p(x_i) - p(x_j)}^2 \leq (1 + \epsilon) \norm{x_i- x_j}^2 
\end{equation}
holds for all $i<j$.
\end{lemma}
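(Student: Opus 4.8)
The plan is to prove the lemma by the probabilistic method, realizing a suitable projector as a \emph{random} one. Let $p$ be the orthogonal projection onto a uniformly random $k$-dimensional subspace of $\R^d$, i.e.\ with $\range(p)$ distributed according to the (unique $O(d)$-invariant) probability measure on $\G_{k,d}$; equivalently, fix $P_0\in\G_{k,d}$ to be the projection onto the span of the first $k$ standard basis vectors and set $p=QP_0Q^\top$ for a Haar-random $Q\in O(d)$. I will show that, once $k$ meets the stated bound, this random $p$ satisfies \eqref{JLeq} for \emph{all} pairs $i<j$ simultaneously with positive probability; an event of positive probability being nonempty, this produces a deterministic $p\in\G_{k,d}$ with the desired property. (If $m\le 1$ there is nothing to prove, so assume $m\ge 2$; note also that $\epsilon^2/2-\epsilon^3/3=\epsilon^2(1/2-\epsilon/3)>0$ for $0<\epsilon<1$.) One could instead use a rescaled Gaussian matrix, but since the statement asks for a genuine projector $p\in\G_{k,d}$, the random-subspace model is the natural choice.

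Fix a pair $i<j$, put $w:=x_i-x_j$, and assume $w\neq 0$ (otherwise \eqref{JLeq} is trivial). Since $\norm{pw}=\norm{P_0Q^\top w}$ and $Q^\top w/\norm{w}$ is uniform on the sphere $S^{d-1}$, the normalized quantity $\tfrac{d}{k}\norm{pw}^2/\norm{w}^2$ is distributed as $\tfrac{d}{k}L$ with $L:=\norm{P_0u}^2$ for $u$ uniform on $S^{d-1}$. Writing $u=g/\norm{g}$ with $g\sim N(0,I_d)$ gives $L\sim\Beta(k/2,(d-k)/2)$, hence $\EE[L]=k/d$ and $\tfrac{d}{k}L$ has mean $1$, matching \eqref{JLeq}. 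The crucial input is the two-sided tail bound
\begin{equation*}
\Pr\!\left[\,\abs{\tfrac{d}{k}L-1}>\epsilon\,\right]\;\le\;2\exp\!\left(-\tfrac{k}{2}\left(\tfrac{\epsilon^2}{2}-\tfrac{\epsilon^3}{3}\right)\right),
\end{equation*}
which comes from the standard Chernoff computation: bound $\Pr[L\ge(1+\epsilon)k/d]$ and $\Pr[L\le(1-\epsilon)k/d]$ via Markov's inequality on $e^{tL}$, optimize over $t$ using the moment generating function of a ratio of chi-square variables, and simplify the resulting exponent with $\log(1+x)\le x-x^2/2+x^3/3$. This is exactly Lemma~2.2 and the estimates in the proof of Theorem~2.1 of \cite{Dasgupta:2003fk}, which I would cite rather than reprove.

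Finally, a union bound over the $\binom{m}{2}<m^2/2$ pairs shows that \eqref{JLeq} fails for some $i<j$ with probability at most $\binom{m}{2}\cdot 2\exp\!\big(-\tfrac{k}{2}(\tfrac{\epsilon^2}{2}-\tfrac{\epsilon^3}{3})\big)$. The hypothesis $k\ge 4\log(m)/(\epsilon^2/2-\epsilon^3/3)$ yields $\tfrac{k}{2}(\tfrac{\epsilon^2}{2}-\tfrac{\epsilon^3}{3})\ge 2\log m$, so each exponential is at most $m^{-2}$ and the failure probability is at most $\tfrac{m(m-1)}{2}\cdot 2m^{-2}=\tfrac{m-1}{m}<1$. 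Hence with positive probability the random projector satisfies every inequality in \eqref{JLeq}, proving the lemma. The one genuinely delicate step is the concentration bound with the sharp exponent $\epsilon^2/2-\epsilon^3/3$ — tracking constants through the two-sided Chernoff estimate and the logarithm expansion — which is precisely what \cite{Dasgupta:2003fk,Matousek:2008al} provide; the reduction by rotational invariance, the identification $\EE[L]=k/d$, and the calibration of the union bound to the stated lower bound on $k$ are all routine.
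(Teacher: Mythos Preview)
Your proposal is correct and follows exactly the approach the paper has in mind: the paper does not give its own proof of this lemma but cites \cite{Dasgupta:2003fk,Matousek:2008al}, and your argument---random projector $P\sim\lambda_{k,d}$, rotational-invariance reduction to a Beta-distributed quantity, the Chernoff tail bound with exponent $\tfrac{k}{2}(\epsilon^2/2-\epsilon^3/3)$, and a union bound over $\binom{m}{2}$ pairs yielding failure probability $(m-1)/m<1$---is precisely the Dasgupta--Gupta proof. This is also consistent with the paper's remark preceding Theorem~\ref{allorth} that the probabilistic refinement follows ``by following the lines of the proof of Lemma~\ref{lemma JL} in \cite{Dasgupta:2003fk}.''
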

For small $\epsilon > 0$, the projector $p$ in Lemma \ref{lemma JL} yields that all of the $\frac{m(m-1)}{2}$ pairwise relative distances 
\begin{equation}\label{eq:set distances}
\Big\{\frac{d}{k}\frac{\|p(x_i) - p(x_j)\|^2}{\|x_i - x_j\|^2}:i<j\Big\}
\end{equation}
are close to $1$, i.e. the projection $p$ preserves all scaled pairwise relative distances well. A good choice of $p$ in Lemma \ref{lemma JL} is based on random projectors $P\sim \lambda_{k,d}$, where $\lambda_{k,d}$ denotes the 
unique orthogonally invariant probability measure on $\G_{k,d}$. The following Theorem is essentially proved by following the lines of the proof of Lemma \ref{lemma JL} in \cite{Dasgupta:2003fk} after replacing the constant $4$ with $(2 + \tau)2$ in the respective bound on $k$.  
\begin{theorem}\label{allorth}
For any $0 < \epsilon < 1$, any $k \leq d, m \in\N$ and any $0<\tau$ with 
\begin{equation*}
\frac{(2 + \tau) 2\log(m)}{\epsilon^2/2 - \epsilon^3/3} \leq k,
\end{equation*}
and any set $\{x_i\}_{i=1}^m\subset\R^d$, the random projector $P\sim\lambda_{k,d}$ satisfies 
\begin{equation}\label{eq:JL random}
\Big\{\frac{d}{k}\frac{\|P(x_i) -P(x_j)\|^2}{\|x_i - x_j\|^2}:i<j\Big\}\in [1-\epsilon,1+\epsilon]
\end{equation}
with probability at least $1 - \tfrac{1}{m^{\tau}} + \tfrac{1}{m^{\tau+1}}$. 
\end{theorem}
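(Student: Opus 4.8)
\textit{Proof sketch.} The plan is to follow the classical Dasgupta--Gupta proof of Lemma \ref{lemma JL} line by line, keeping track of constants so that the refined probability bound appears. The first step is to reduce the claim to a one-dimensional concentration statement. For a fixed nonzero $w\in\R^d$, write $P=U\Pi_0 U^\top$ with $U$ Haar-distributed on the orthogonal group $O(d)$ and $\Pi_0$ the projection onto the first $k$ coordinates (this is the defining property of $\lambda_{k,d}$); then $\|Pw\|^2=\|\Pi_0 U^\top w\|^2$, and since $U^\top w$ is uniform on the sphere of radius $\|w\|$,
\begin{equation*}
\frac{\|Pw\|^2}{\|w\|^2}\ \overset{d}{=}\ \frac{\sum_{i=1}^{k} g_i^2}{\sum_{i=1}^{d} g_i^2},
\end{equation*}
with $g=(g_1,\dots,g_d)$ a standard Gaussian vector; this ratio has mean $k/d$. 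Taking $w=x_i-x_j$ for each pair $i<j$ (coincident points being trivial), the event that the $(i,j)$ entry of the set in \eqref{eq:JL random} leaves $[1-\epsilon,1+\epsilon]$ is exactly $\{\|Pw\|^2/\|w\|^2\notin[(1-\epsilon)\tfrac kd,(1+\epsilon)\tfrac kd]\}$.

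The second step is the tail bound for one pair. The standard Chernoff estimates for ratios of $\chi^2$ variables --- precisely Dasgupta--Gupta's Lemma 2.2, together with $\ln(1+\epsilon)\le\epsilon-\tfrac{\epsilon^2}{2}+\tfrac{\epsilon^3}{3}$ and $\ln(1-\epsilon)\le-\epsilon-\tfrac{\epsilon^2}{2}$ --- give, for $0<\epsilon<1$,
\begin{equation*}
\Pr\Big[\tfrac dk\tfrac{\|P(x_i)-P(x_j)\|^2}{\|x_i-x_j\|^2}\notin[1-\epsilon,1+\epsilon]\Big]\ \le\ 2\exp\Big(-\tfrac k2\big(\tfrac{\epsilon^2}{2}-\tfrac{\epsilon^3}{3}\big)\Big),
\end{equation*}
the two summands coming from the upper and the lower tail. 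Plugging in the hypothesis $k\ge(2+\tau)\,2\log(m)/(\epsilon^2/2-\epsilon^3/3)$ bounds the exponent by $-(2+\tau)\log m$, so the one-pair failure probability is at most $2m^{-(2+\tau)}$.

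The third step is a union bound over the $\binom{m}{2}=\tfrac{m(m-1)}{2}$ pairs $i<j$: the total failure probability is at most $\tfrac{m(m-1)}{2}\cdot 2m^{-(2+\tau)}=m^{-\tau}-m^{-(\tau+1)}$, and passing to the complement yields the asserted probability $1-m^{-\tau}+m^{-(\tau+1)}$. The one place I would be careful is this last computation: keeping the extra term $+m^{-(\tau+1)}$ forces one to use the exact count $\binom{m}{2}$ rather than the customary slack bound $m^2/2$, so that the factor $2$ from the two $\chi^2$ tails is exactly absorbed by the factor $2$ built into the modified constant $(2+\tau)\,2$. Apart from this bookkeeping nothing changes relative to \cite{Dasgupta:2003fk}; the concentration inequality of the second step, although the technical heart of the argument, is entirely classical and needs no new ideas. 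The main (and quite modest) obstacle is therefore just to check that the constants line up, not to devise any new argument.
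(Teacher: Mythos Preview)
Your proposal is correct and matches the paper's approach exactly: the paper does not give an independent proof but simply states that the theorem follows by rerunning the Dasgupta--Gupta argument with the constant $4$ replaced by $(2+\tau)2$, which is precisely what you carry out. Your bookkeeping with the exact pair count $\binom{m}{2}$ to obtain the sharper bound $1-m^{-\tau}+m^{-(\tau+1)}$ is the only point requiring care, and you handle it correctly.
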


\subsection{Competing objectives}\label{comp}
A projector $p$ satisfying the near-isometry property \eqref{JLeq} implies
\begin{equation*}
(1 - \epsilon) \tfrac{k}{d} \tvar(x) \leq \tvar(px) \leq (1 + \epsilon) \tfrac{k}{d} \tvar(x),
\end{equation*}
so that the total variance of the projected data $px$ may not be maximized for $k<d$. In particular, with high probability a random projector $P\sim\lambda_{k,d}$ does not suit the objective of maximizing the total variance, and we even observe $\mathbb{E}\tvar(Px) = \frac{k}{d}\tvar(x)$, see \eqref{exp1} in the appendix. PCA does not guarantee any local geometric property and distances between pairs of points can be arbitrarily distorted \cite{Achlioptas:2003wo}, see \cite{Neumayer:2019sy} for more robust PCA. The preservation of larger distances is favored since PCA maximizes \eqref{tvarpx} among all $p \in \G_{k,d}$ and $\|p(x_i)-p(x_j)\| \leq \| x_i - x_j \|$ holds for all $i<j$. Close but distinct points, could even be projected onto a single point, which violates the preservation of pairwise relative distances, see Figure \ref{pcaplot}. 
% and pairwise distances may be distorted, e.g. when data lies on a nonlinear manifold. Preserving the geometrical structure is often important for subsequent classification tasks.  

\begin{figure}
\includegraphics[width = 0.48\textwidth]{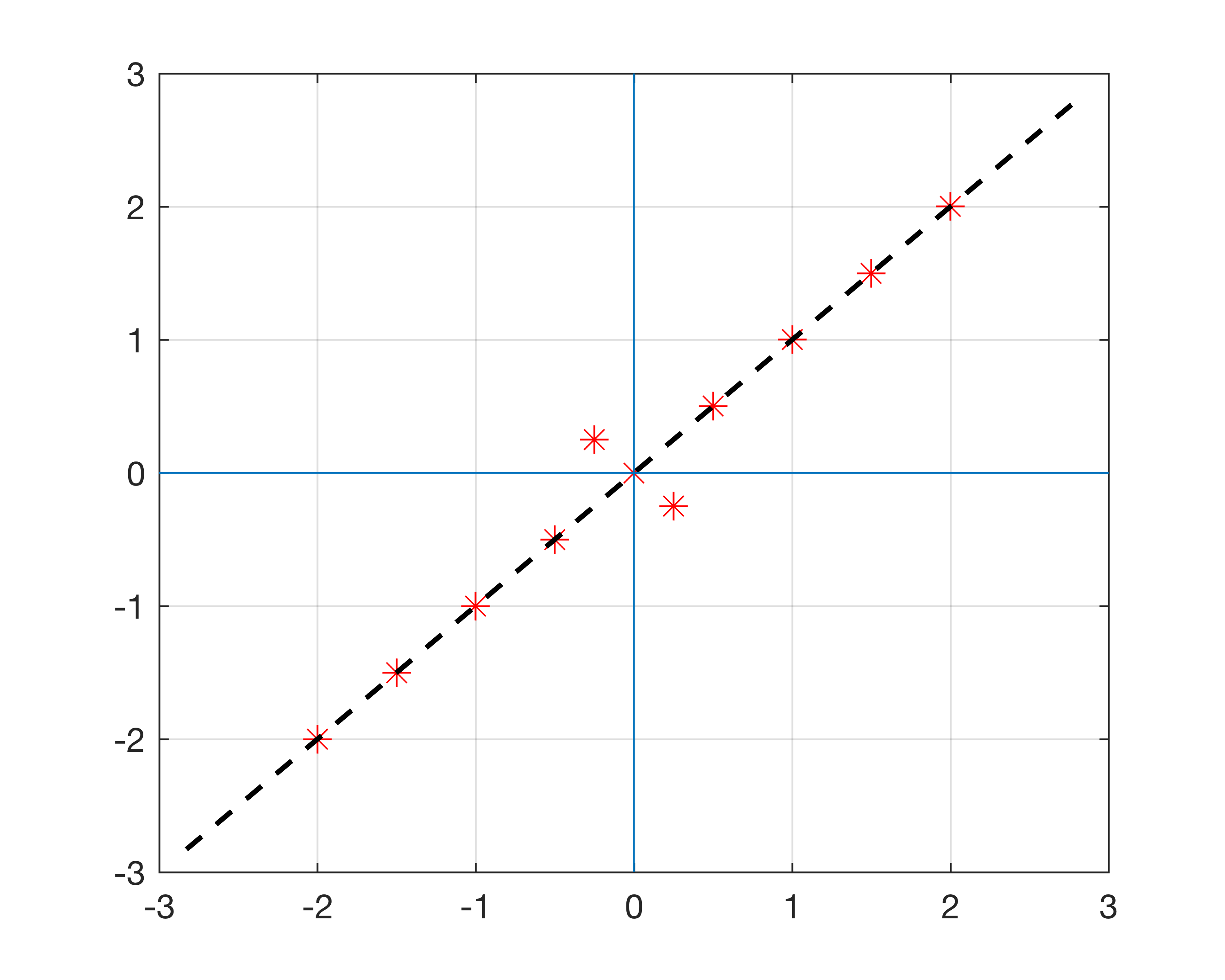}
\caption{A trivial example of PCA distorting smaller distances. Choosing the first principal component, PCA projects the two dimensional data points {\color{red}*} onto the plane of the first eigendirection (- -). The Euclidian distances of the points lying on the diagonal are preserved, whereas the two points with smaller distances are projected onto a single point (the origin).}
\label{pcaplot}
\end{figure}

To more quantitatively understand the relation between the two competing objectives, we consider the sample mean and the uncorrected sample variance of the pairwise relative distances \eqref{eq:set distances},
\begin{align}
 \mathcal{M}(p,x)& := \frac{2}{m(m-1)} \sum_{i<j}\frac{d}{k}\frac{\|p(x_i - x_j)\|^2}{\|x_i-x_j\|^2}, \label{mpx} \\
 \mathcal{V}(p,x)&:=\frac{2}{m(m-1)}\sum_{i<j}\frac{d^2}{k^2}\frac{\|p(x_i - x_j)\|^4}{\|x_i-x_j\|^4} - \mathcal{M}(p,x)^2.\label{dist}
 \end{align}
Recall that good preservation of the relative pairwise distances in \eqref{eq:set distances} asks for $\mathcal{M}(p,x)$ being close to $1$ and the variance $\mathcal{V}(p,x)$ being small. In the following, we analyze $\tvar(px)$, $\mathcal{M}(p,x)$, and $\mathcal{V}(p,x)$ and their expectations for random $P\in\G_{k,d}$.

\begin{figure}
\subfigure[$k = 10$]{\includegraphics[width = 0.48\textwidth]{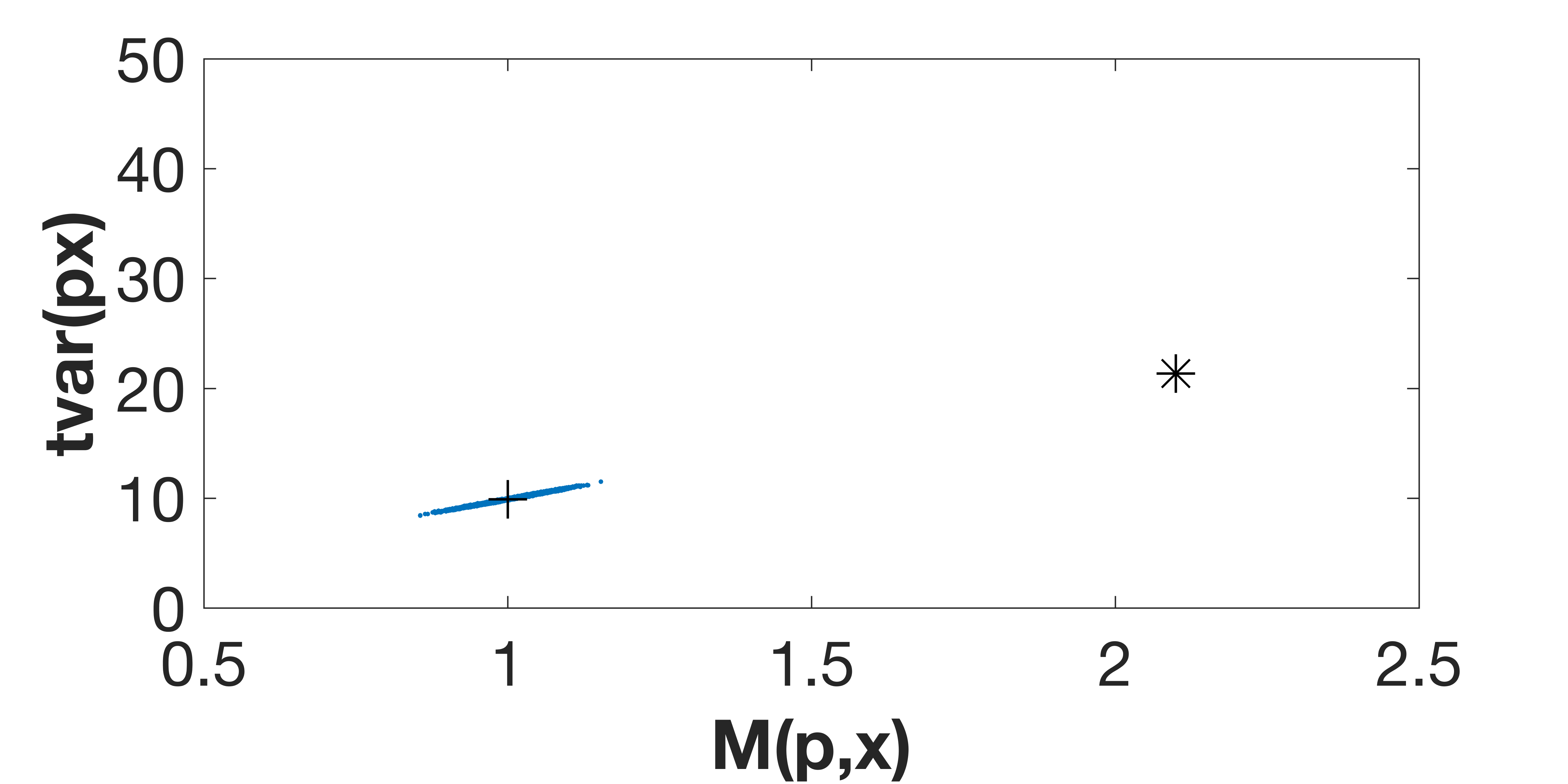}}
\subfigure[$k = 20$]{\includegraphics[width = 0.48\textwidth]{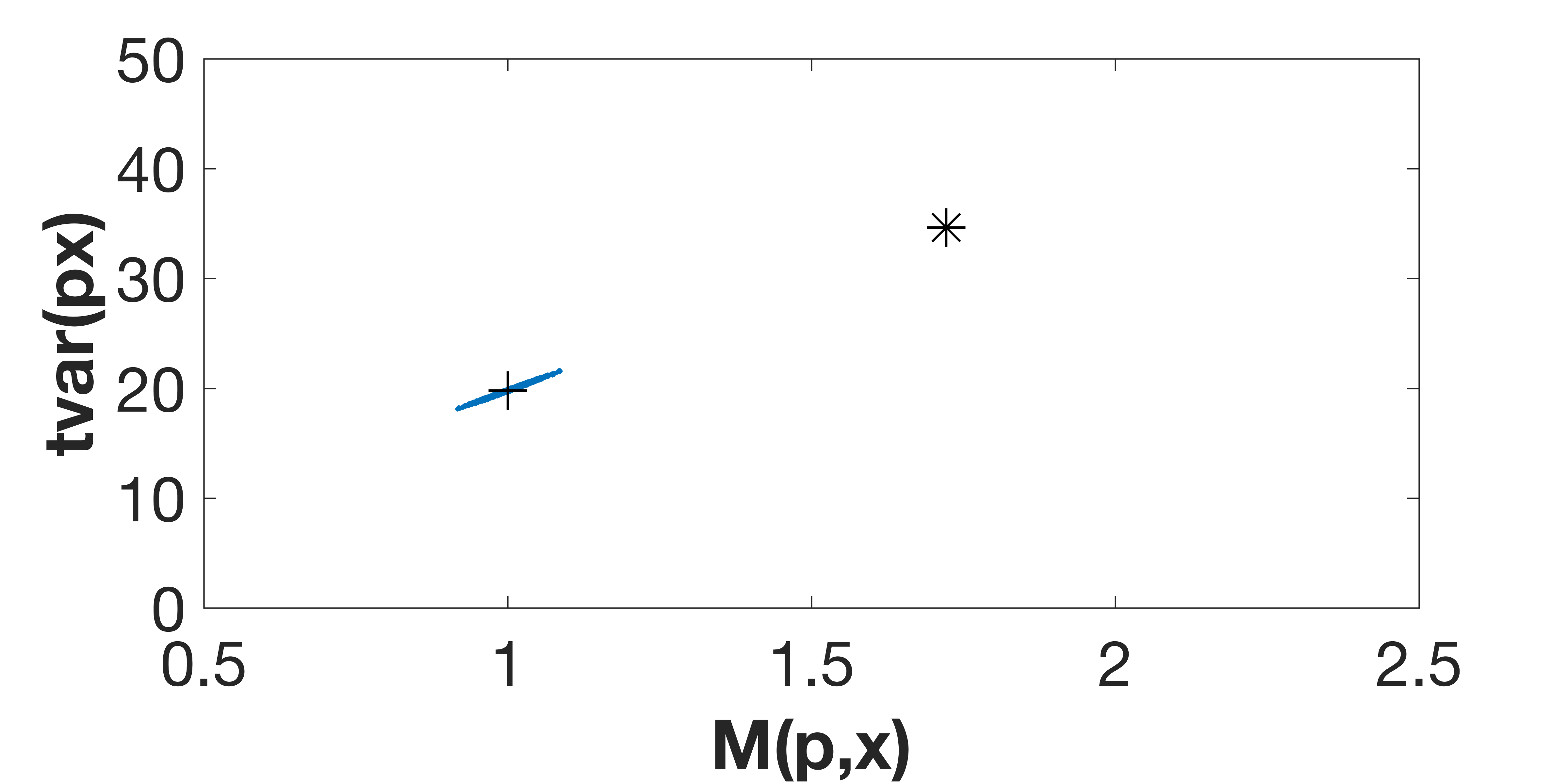}}
\subfigure[$k = 30$]{\includegraphics[width = 0.48\textwidth]{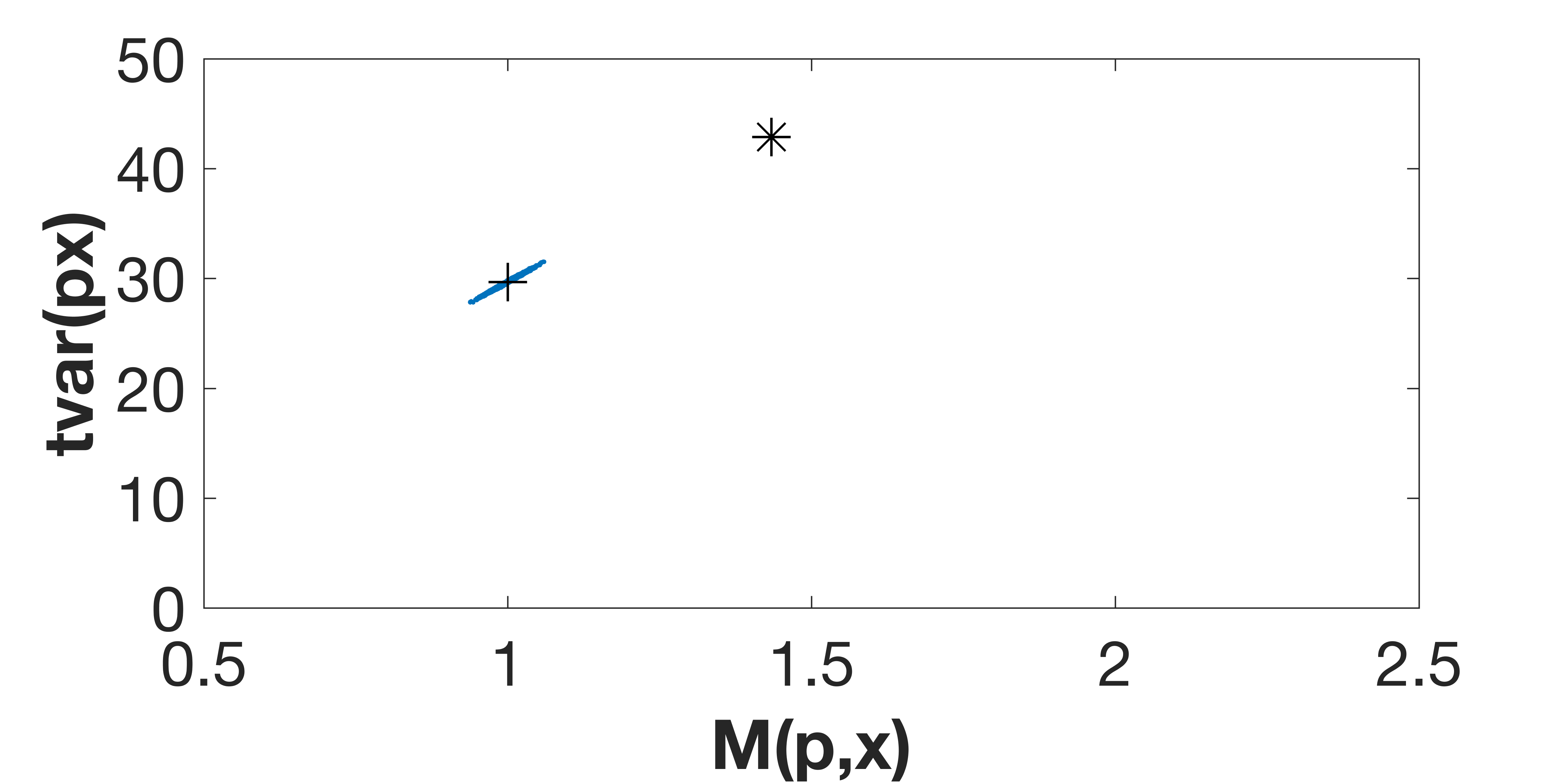}}
\subfigure[$k = 40$]{\includegraphics[width = 0.48\textwidth]{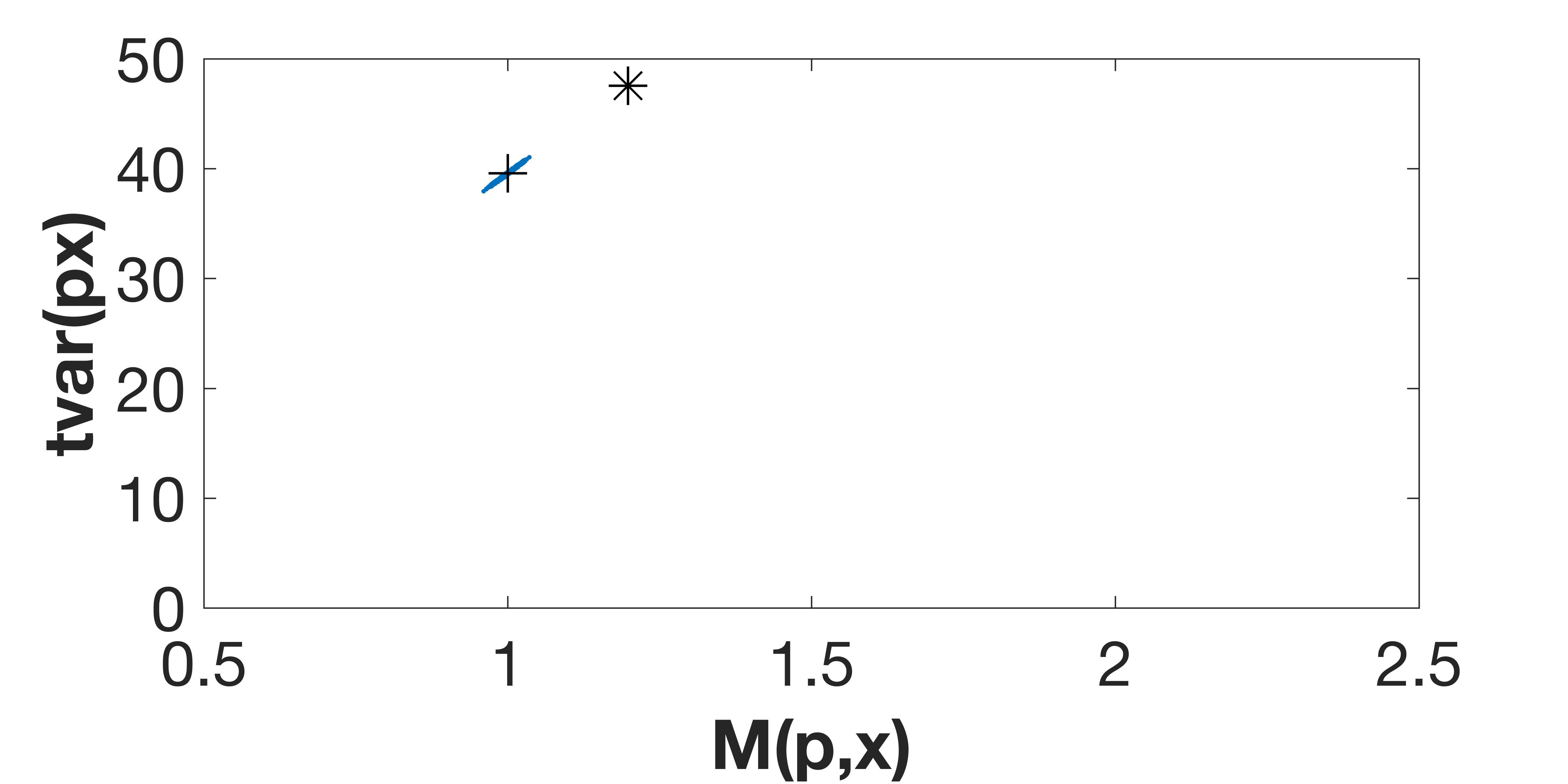}}
\subfigure[$k = 10$]{\includegraphics[width = 0.48\textwidth]{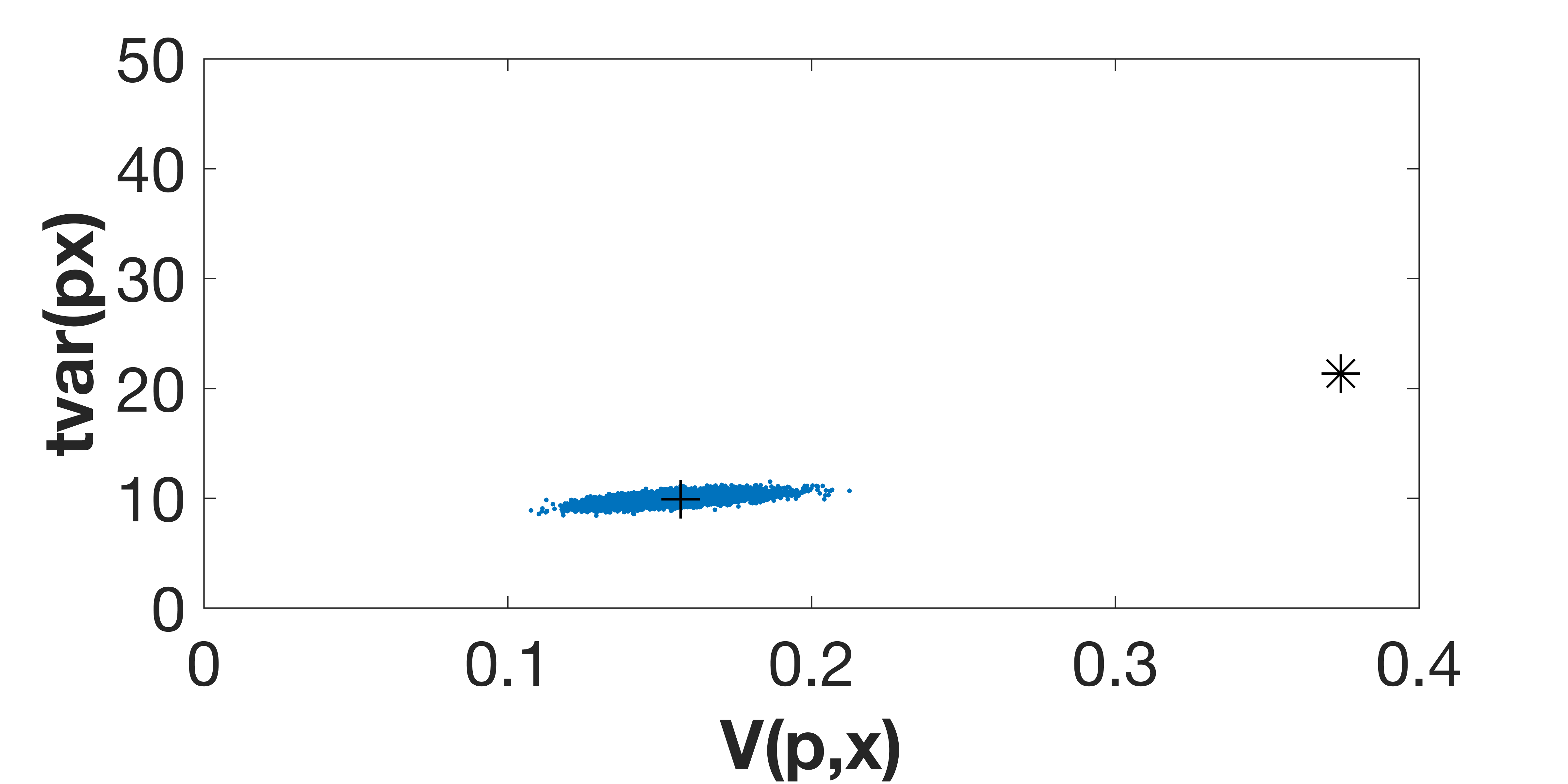}}
\subfigure[$k = 20$]{\includegraphics[width = 0.48\textwidth]{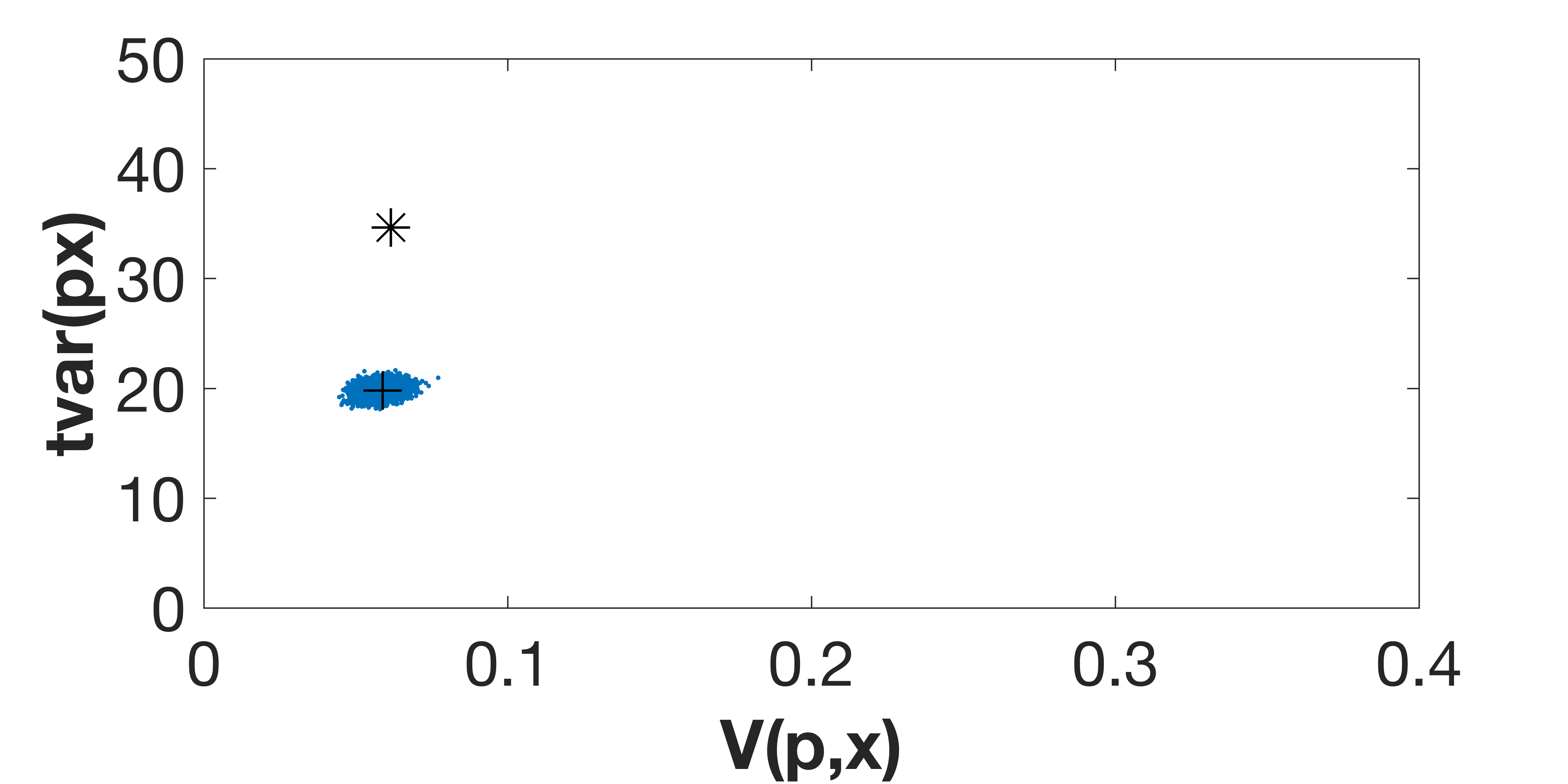}}
\subfigure[$k = 30$]{\includegraphics[width = 0.48\textwidth]{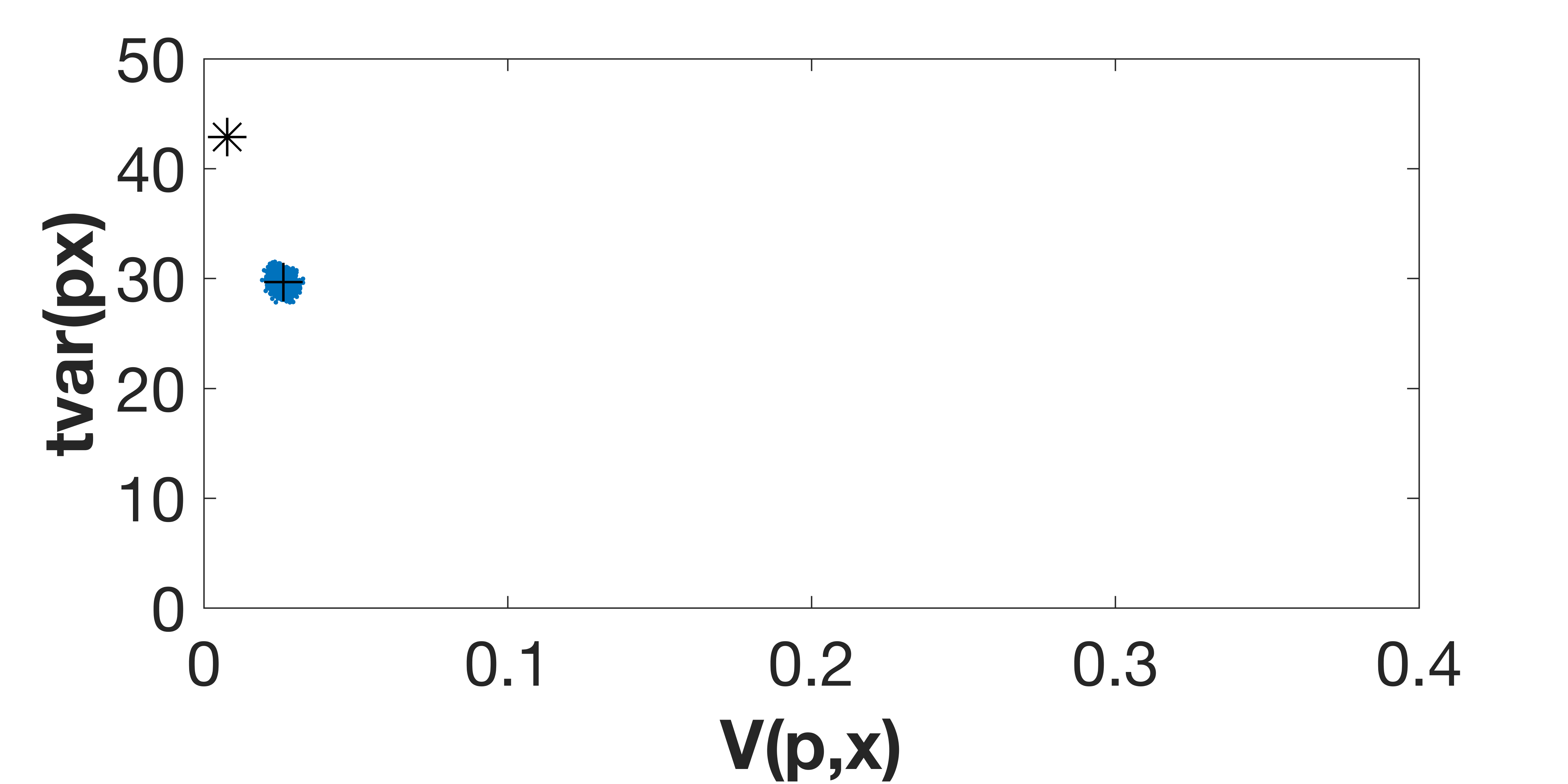}}
\subfigure[$k = 40$]{\includegraphics[width = 0.48\textwidth]{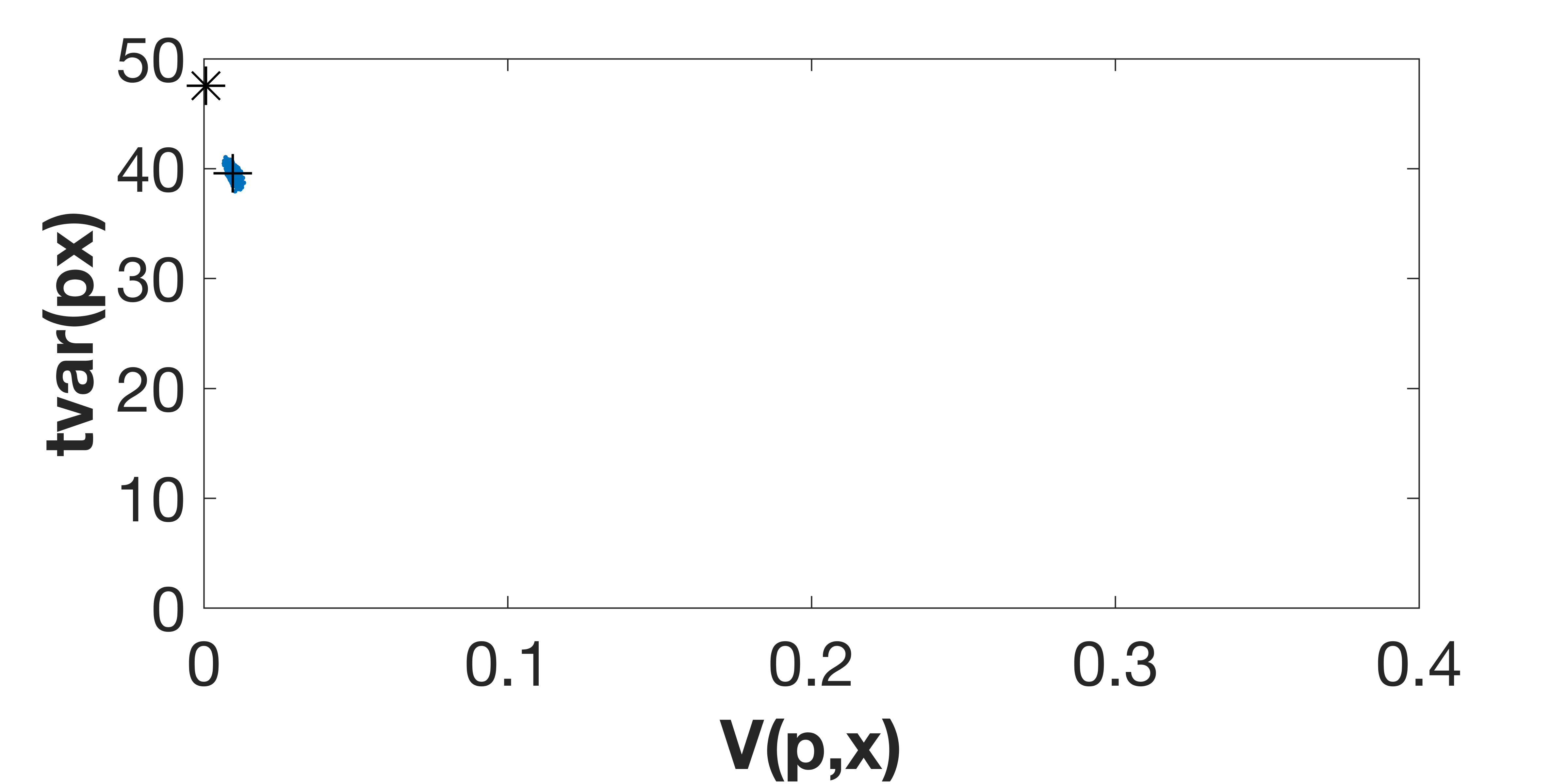}}
\caption{Competing properties: $10000$ random projections $p \sim \lambda_{k,50}$ versus PCA ($*$), plotted concerning $\tvar(px)$, $\mathcal{M}(p,x)$ and $\mathcal{V}(p,x)$. The normal distributed fixed data set $x$ has total variance $\tvar(x) = 49.5$. Random projections cluster around their expectation values \eqref{exptvar}, \eqref{expM} and \eqref{eq:sam}, marked by \small{+}.}\label{figcomprop}
\end{figure}

In Figure \ref{figcomprop} we see a simple numerical experiment, where we first create an independent, normally distributed fixed data set $\{x_i\}_{i=1}^{m}$ with $x_i \in \R^d$ for $i=1, \dots, m$ and $m = 100$, $d = 50$. We then compute PCA, for $k = 10, 20, 30, 40$, as well as $n = 10000$ random projections $p$ distributed according to $\lambda_{k,50}$. In Figure \ref{figcomprop} (a) - (d) we can see that the more $k$ differs from $d$, the more PCA and random projections differ concerning $\tvar(px)$ and $\mathcal{M}(p,x)$. Those differences may lead diverse behavior in subsequent data analysis. Moreover, we compare $\mathcal{M}(p,x)$ and $\mathcal{V}(p,x)$ in Figure \ref{figcomprop} (e) - (h) for the different $k$. We can see that again when $k$ is much smaller than $d$, random projections and PCA differ more concerning the variance of pairwise distances $\mathcal{V}(p,x)$. For $k=10$ the variance for PCA is higher in comparison to random projections, see Figure \ref{figcomprop} (e), for $k = 40$ vice versa, see Figure \ref{figcomprop} (h). Note that the theoretical bounds stated in Theorem \ref{lemma JL} are much higher than the dimensions $k$ used in the experiments, but the projections still preserve relative pairwise distances very well. In \cite{Bingham:2001:RPD:502512.502546} similar observations were made on empiric experiments with image and text data. 

The amount of variance kept in the principal components comparing real world and random data has been experimentally studied, e.g. in \cite{comprealrand2} and \cite{comprealrand}. Both studies determine that the difference occurs mainly in the first principal component.

\begin{remark}
In the numerical example we compare random projections and PCA directly, serving as the corresponding projections to the objectives O1) and O2). We observe that even for not so high dimensional ($d = 50$) data $x$ and $k\ll d/2$, PCA severely looses information in terms of total variance, i.e. more than $50 \%$ for $k=10$, and more importantly, looses much more information on pairwise relative distances than random projections. If both types of information are of interest, pairwise relative distances and high total variance, one should therefore favor random projections over PCA for $k \ll d/2$ to balance the two objectives O1) and O2) and vice versa. Note that with a large amount of data one might still want to favor random projectors since their construction is computationally much cheaper and independent from the data. On the other hand, if objective O2) is negligible, e.g. tasks with very noisy data, then PCA would be the favorable choice for all $k$.
\end{remark}

Information of data can be quantified and expressed in different ways. One crucial part in dimension reduction is the decision of what kind of information shall be kept, which depends on several parameters including the quality of the data and the analysis task. Variants of PCA, focusing on the preservation of variance, have been widely used in real world problems with big success, especially in denoising, when the preservation of all pairwise relative distances may be counterproductive, e.g. in dMRI imaging \cite{VERAART2016394} and color filter array images \cite{4798177}. Drawbacks are the necessity for all data being available from the start and the high computational costs. For very high dimensional and large data sets the computation of PCA is often not feasible. Besides the huge benefit of data independence and low computational cost when using random projections, the near-isometry property often allows to establish that the solution found in the low-dimensional space is a good approximation to the solution in the original space (\cite{Achlioptas:2003wo}, \cite{MAL-035}). 

Algorithms in machine learning often need or benefit from sufficient estimates of pairwise distances, e.g. approximate nearest-neighbor problems, supervised classification \cite{numax} and subspace clustering \cite{subclus}. In \cite{Linial1995} algorithmic applications of near-isometry embeddings have been introduced. In \cite{Bingham:2001:RPD:502512.502546} random projections have been successfully applied to noisy and noiseless text and image data. The experimental studies include the comparison of preservation of pairwise distances between random projections and PCA. The results coincide with our observations, that for $k > d/2$ PCA is able to preserve the pairwise distances sufficiently, whereas for $k < d/2$ PCA distorts them. The smaller $k$ the worse the distortion, whereas random projections preserve similarities still well for very small $k$, while being computationally much cheaper than PCA. One should point out again that favoring preservation of pairwise distances relies on the accuracy of the original distances.  

PCA and random projections are orthogonal projections favoring two different aims. We want to study in the context of the whole set of orthogonal projections if the two objectives O1) and O2) could be reached at the same time. We will see that the objectives act competing and therefore we suggest a balancing projector for tasks that benefit from both objectives.%Note that $\mathbb{E}\mathcal{V}(P,x) \leq a_{k,d}$ and $a_{k,d}$ tends to $\frac{2}{k}$ if $d\rightarrow\infty$.
%%%%%%%%%%%%%%%%%%%%%%%%%%%%%%%%%%%%%%%%%%%%%% 
\subsection{Covariances and correlation between competing objectives}\label{correl}
For further mathematical analysis we first introduce a more general class of probability measures on $\G_{k,d}$ that resemble $\lambda_{k,d}$ sufficiently well:
\begin{definition}\label{tdes}
A Borel probability measure $\lambda$ on $\G_{k,d}$ is called a \emph{cubature measure of strength $t$} if 
 \begin{equation*}
    \int_{\G_{k,d}} f(p)\mathrm d\lambda_{k,d}(p) = \int_{\G_{k,d}} f(p)\mathrm d\lambda(p),\quad \text{for all } f\in \Pol_t(\R^{d^2}),
  \end{equation*}
  where $\Pol_t(\R^{d^2})$ denotes the set of multivariate polynomials of total degree $t$ in $d^2$ variables. 
\end{definition}
Existence of cubature measures is studied, for instance, in \cite{Harpe:2005fk}. For random $P$, we now determine the expectation values for our $3$ quantities of interest: $\tvar(Px)$, $\mathcal{M}(P,x)$, and $\mathcal{V}(P,x)$. If $P\sim\lambda$ and $\lambda$ is a cubature measure of strength at least $2$, the identities \eqref{exp1} and \eqref{exp2} in the appendix and a short calculation yield 
\begin{align}
&\mathbb{E}\tvar(Px) = \tfrac{k}{d}\tvar(x),\label{exptvar}\\
&\mathbb{E}\mathcal{M}(P,x)  =1, \label{expM}\\
&\mathbb{E}\mathcal{V}(P,x) = a_{k,d} (1-\tfrac{4}{m^2(m-1)^2}\sum_{\substack{i<j\\
l<r}}\langle \tfrac{x_i - x_j}{\|x_i-x_j\|},\tfrac{x_l- x_r}{\|x_l-x_r\|}\rangle^2),\label{eq:sam}%\leq a\rightarrow \frac{2}{k},\text{ for } d\rightarrow\infty,\label{eq:a in}
\end{align}
where $a_{k,d} =  \tfrac{2d(d-k)}{k(d-1)(d+2)}$. The expected sample variance in \eqref{eq:sam} satisfies
\begin{equation*}
\mathbb{E}\mathcal{V}(P,x) \leq a_{k,d} \longrightarrow \frac{2}{k},\quad\text{for}\quad d\rightarrow\infty.
\end{equation*}
This asymptotic bound relates to Theorem \ref{allorth} and alludes to a near-isometry property of the type \eqref{eq:JL random} for $k$ sufficiently large.

The following Theorem \ref{theorem} provides a lower bound  for random $P$ on the population correlation 
%The population correlation is 
  \begin{equation}\label{eq:bound from below cov}
  \Corr(\mathcal{M}(P,x), \tvar(Px)) = \frac{\Cov(\mathcal{M}(P,x) , \tvar(Px))}{\sqrt{\Var(\mathcal{M}(P,x))} \sqrt{\Var(\tvar(Px))}}.
 \end{equation} 
It holds for arbitrary dimensions $d$ and subsequently specifies the asymptotic behavior for $d\rightarrow\infty$:
\begin{theorem}\label{theorem}
Let $x=\{x_i\}_{i=1}^m\subset\R^d$ be pairwise different and let $P\sim\lambda$, with $\lambda$ being a cubature measure of strength at least $2$. For $d \geq \tfrac{m(m-1)}{2}$, the correlation \eqref{eq:bound from below cov} is bounded from below by 
\begin{equation}\label{thm1}
 %\Corr(\mathcal{M}(P,x), \tvar(Px)) \geq    
 \tfrac{\min_{i\neq j} \norm{x_i - x_j }^2}{\max_{i\neq j} \norm{x_i - x_j }^2}  - \tfrac{m(m-1)}{2d} \cdot \tfrac{\max_{i\neq j} \norm{x_i - x_j }^2}{\min_{i\neq j} \norm{x_i - x_j }^2}. 
\end{equation}
Let $\{x_i\}_{i=1}^m\subset \R^d$ be random points, whose entries are independent, identically distributed with finite $4$-th moments, that are uniformly bounded in $d$. Then \eqref{thm1} converges towards $1$ in probability for $d\rightarrow \infty$.
\end{theorem}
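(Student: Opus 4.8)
The plan is to compute the three moments appearing in \eqref{eq:bound from below cov} — $\Cov(\mathcal{M}(P,x),\tvar(Px))$, $\Var(\mathcal{M}(P,x))$, and $\Var(\tvar(Px))$ — in terms of the normalized pairwise difference vectors $u_{ij} := (x_i-x_j)/\|x_i-x_j\|$ and the absolute distances $d_{ij} := \|x_i-x_j\|^2$, using only the assumption that $\lambda$ is a cubature measure of strength $\geq 2$. Since $\tvar(Px)$ and $\mathcal{M}(P,x)$ are both quadratic forms in the entries of $P$ (after expanding $\|P(x_i-x_j)\|^2 = \langle P, (x_i-x_j)(x_i-x_j)^\top\rangle$ when $P$ is a projector), all needed expectations are integrals of polynomials of degree $\leq 2$ in the entries of $p$, hence are determined by $\lambda_{k,d}$. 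I would invoke the identities \eqref{exp1}, \eqref{exp2} already cited from the appendix, together with \eqref{exptvar}–\eqref{eq:sam}, so that $\mathbb{E}\mathcal{M}=1$, $\mathbb{E}\tvar(Px)=\frac{k}{d}\tvar(x)$, and $\mathbb{E}\mathcal{V}(P,x)$ is the expression in \eqref{eq:sam}. The key structural fact is that, because $\tvar(Px)=\frac{1}{m(m-1)}\sum_{i<j}\|P(x_i-x_j)\|^2 = \frac{1}{m(m-1)}\sum_{i<j} d_{ij}\cdot\frac{\|P(x_i-x_j)\|^2}{d_{ij}}$, it is a $d_{ij}$-weighted average of the very same random variables $Z_{ij} := \frac{d}{k}\frac{\|P(x_i-x_j)\|^2}{d_{ij}}$ whose \emph{unweighted} average is $\mathcal{M}(P,x)$. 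So $\Cov(\mathcal{M},\tvar(Px))$ and the two variances are all built from the single covariance structure $\Cov(Z_{ij},Z_{lr})$, which by the cubature/degree-2 property depends only on $\langle u_{ij},u_{lr}\rangle^2$ and is the same object governing \eqref{eq:sam}.

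Concretely: write $c_{ij,lr} := \Cov(Z_{ij},Z_{lr})$. Then $\Var(\mathcal{M}) = \frac{1}{N^2}\sum_{ij,lr} c_{ij,lr}$ with $N=\binom{m}{2}$; $\Var(\tvar(Px)) = \frac{k^2}{d^2 m^2(m-1)^2}\sum_{ij,lr} d_{ij}d_{lr}\,c_{ij,lr}$; and $\Cov(\mathcal{M},\tvar(Px)) = \frac{k}{d\,N\,m(m-1)}\sum_{ij,lr} d_{lr}\,c_{ij,lr}$. Because $c_{ij,lr}$ is a Gram-type matrix (it is positive semidefinite — it is the covariance matrix of the vector $(Z_{ij})_{i<j}$), and all the weights $d_{ij}$ are positive, one can bound the weighted sums by the extreme weights: $\min_{ij} d_{ij}\cdot S \leq \sum d_{lr} c_{ij,lr}\,(\text{summed appropriately}) \leq \max_{ij}d_{ij}\cdot S$ only works cleanly when the row sums $\sum_{lr} c_{ij,lr}$ are nonnegative, which holds here since $\sum_{lr}c_{ij,lr} = N\,\Cov(Z_{ij},\mathcal{M})$ and one checks from the explicit degree-2 formula that this is nonnegative (indeed the diagonal term $c_{ij,ij} = a_{k,d}(1-\langle u_{ij},u_{ij}\rangle^2\cdot 0\dots)$ dominates since off-diagonal terms $\propto \langle u_{ij},u_{lr}\rangle^2 - \tfrac1d$ are small relative to it in the regime $d\geq N$). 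Pushing these inequalities through the Cauchy–Schwarz-type estimate $\Cov/\sqrt{\Var\cdot\Var}$ and tracking the worst case yields exactly the lower bound \eqref{thm1}: the main term $\frac{\min d_{ij}}{\max d_{ij}}$ comes from the weighted-versus-unweighted averaging mismatch, and the correction $-\frac{N}{d}\cdot\frac{\max d_{ij}}{\min d_{ij}}$ absorbs the off-diagonal cubature error terms of order $1/d$.

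For the asymptotic statement, let the entries of each $x_i$ be i.i.d.\ with finite, $d$-uniformly bounded fourth moments and common variance $\sigma^2$. By the law of large numbers applied coordinatewise, $d_{ij}=\|x_i-x_j\|^2 = \sum_{\ell=1}^d (x_{i\ell}-x_{j\ell})^2 = 2\sigma^2 d + o_P(d)$ simultaneously for all $\binom{m}{2}$ pairs (finitely many), so $\frac{\min_{ij} d_{ij}}{\max_{ij} d_{ij}} \to 1$ in probability; the uniformly bounded fourth moments give enough control (e.g.\ via Chebyshev on $\mathrm{Var}(d_{ij})=O(d)$, so $d_{ij}/d \to 2\sigma^2$) to make this rigorous. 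Meanwhile $\frac{N}{d}\cdot\frac{\max d_{ij}}{\min d_{ij}} \to 0$ since $N$ is fixed and the ratio stays bounded. Hence the lower bound \eqref{thm1} tends to $1$ in probability, and since a correlation is always $\leq 1$, the correlation itself converges to $1$ in probability.

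The main obstacle I anticipate is the explicit evaluation of the degree-2 moments — i.e., establishing the exact form of $c_{ij,lr}=\Cov(Z_{ij},Z_{lr})$ as an affine function of $\langle u_{ij},u_{lr}\rangle^2$ with the correct constants $a_{k,d}$ and $1/d$-scale, and then verifying that the row sums of $(c_{ij,lr})$ are nonnegative so the weighted-average sandwiching argument is valid. This requires care with the second-moment tensor $\mathbb{E}[P\otimes P]$ over the Grassmannian (equivalently $\mathbb{E}[p_{\alpha\beta}p_{\gamma\delta}]$), which is the standard but bookkeeping-heavy computation underlying \eqref{exp2} and \eqref{eq:sam}; everything else is careful but routine weighting-inequality and concentration work.
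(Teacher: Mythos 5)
Your overall strategy is the same as the paper's: both reduce everything to the degree-$2$ moments over the Grassmannian supplied by the cubature property --- indeed $\Cov\bigl(\tfrac{d}{k}\norm{Py}^2,\tfrac{d}{k}\norm{Pz}^2\bigr)=a_{k,d}\bigl(\langle y,z\rangle^2-\tfrac{1}{d}\norm{y}^2\norm{z}^2\bigr)$, so your $c_{ij,lr}=a_{k,d}\bigl(\langle u_{ij},u_{lr}\rangle^2-\tfrac1d\bigr)$ is exactly right, as is your check that its row sums are $\geq a_{k,d}(1-N/d)\geq 0$ for $d\geq N$ --- and then compare the $\norm{x_i-x_j}^2$-weighted average $\tvar(Px)$ with the unweighted average $\mathcal{M}(P,x)$ through the extreme pairwise distances. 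Your second part (Chebyshev concentration of $\norm{x_i-x_j}^2$ about a mean of order $d$, a union bound over the $\binom{m}{2}$ pairs, hence $\min/\max\to1$ in probability, killing the $\tfrac{N}{d}$ correction term) is essentially verbatim the paper's argument.

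The gap is in the inequality mechanism you propose for the first part. Writing $C=(c_{ij,lr})$ and $w=(d_{ij})$, row-sum nonnegativity of $C$ does give the lower bound you need for the bilinear numerator, $\mathbf{1}^\top Cw\geq(\min_{ij}d_{ij})\,\mathbf{1}^\top C\mathbf{1}$. But $\Var(\tvar(Px))$ is, up to scaling, the \emph{quadratic} form $w^\top Cw$, and for a positive semidefinite matrix with negative off-diagonal entries (which $C$ has, since $\langle u_{ij},u_{lr}\rangle^2<\tfrac1d$ can occur) nonnegative row sums do not imply $w^\top Cw\leq(\max_{ij}d_{ij})^2\,\mathbf{1}^\top C\mathbf{1}$: already $C=\left(\begin{smallmatrix}1&-1\\-1&1\end{smallmatrix}\right)$, $w=(2,1)^\top$ gives $w^\top Cw=1>0=(\max_iw_i)^2\,\mathbf{1}^\top C\mathbf{1}$. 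So the step ``pushing these inequalities through $\Cov/\sqrt{\Var\cdot\Var}$'' is not justified as stated. The paper avoids any comparison between the two variances: with $y_i$ ranging over the differences and $\hat y_i=y_i/\norm{y_i}$, it bounds \emph{each} variance from above by dropping the negative term $-\tfrac{a_{k,d}}{d}(\cdot)^2$ in the explicit formula, converts normalized to unnormalized inner products via $\langle\hat y_i,\hat y_j\rangle^2\leq\langle y_i,y_j\rangle^2/\min_i\norm{y_i}^4$ and $\langle y_i,\hat y_j\rangle^2\geq\langle y_i,y_j\rangle^2/\max_i\norm{y_i}^2$, bounds the covariance below accordingly, checks that $d\geq M=\tfrac{m(m-1)}{2}$ makes that covariance nonnegative (so the division is legitimate), and finishes with $\sum_{i,j}\langle y_i,y_j\rangle^2\geq M\min_i\norm{y_i}^4$. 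With those elementary substitutions in place of the row-sum sandwiching, your plan closes and reproduces \eqref{thm1}.
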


%On the other hand, the elementary proof in Appendix \ref{app:2} of our weaker claim is simply based on Chebychev's inequality. It reveals that it is sufficient to assume in Theorem \ref{theorem} that all entries of $X_1,\ldots,X_m$ are independent, identically distributed according to a probability distribution, whose first $4$ moments are bounded by a constant independent of $d$. 
\begin{figure}
\subfigure[$d = 50$]{\includegraphics[width = 0.48\textwidth]{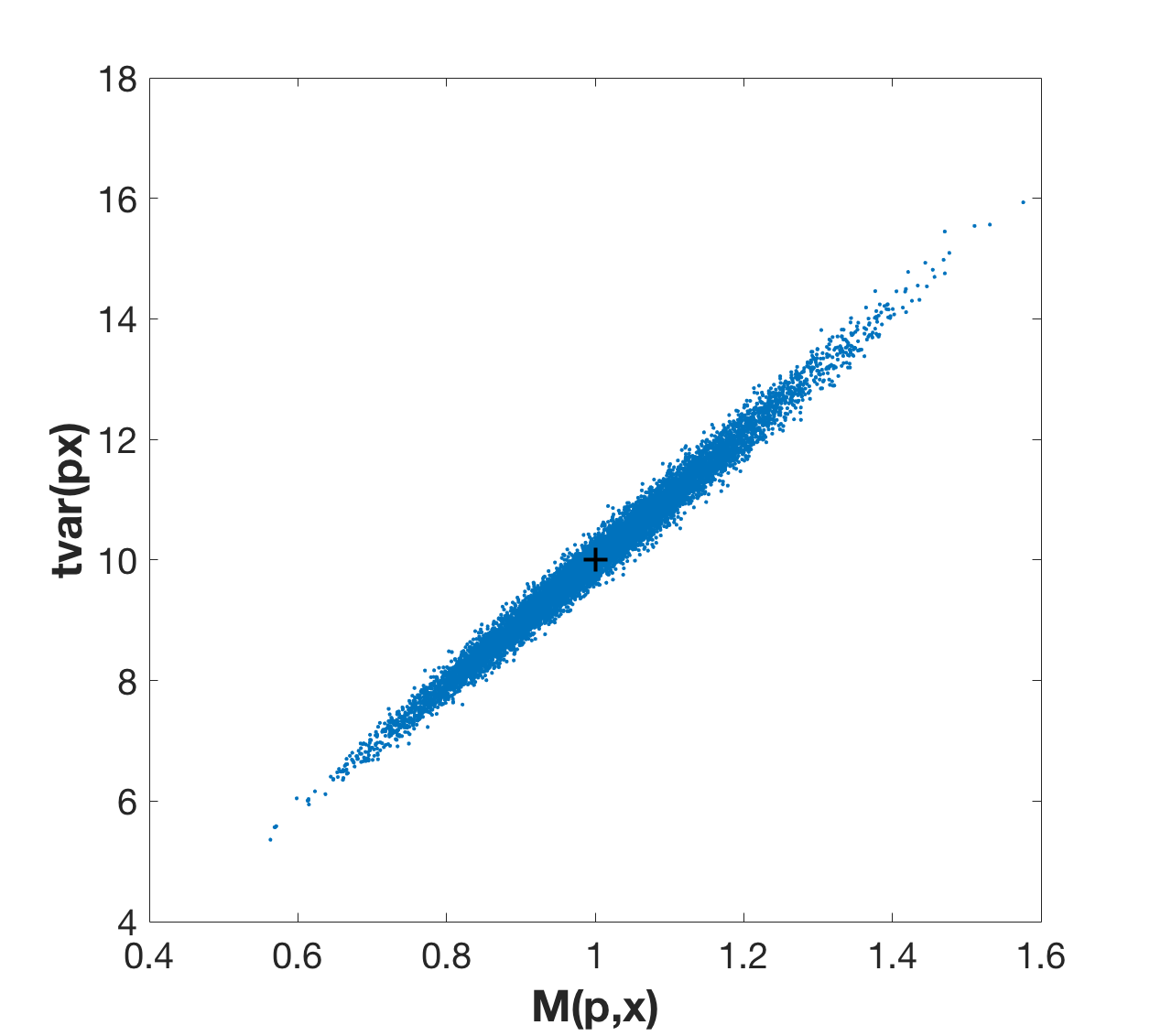}}
\subfigure[$d = 100$]{\includegraphics[width = 0.48\textwidth]{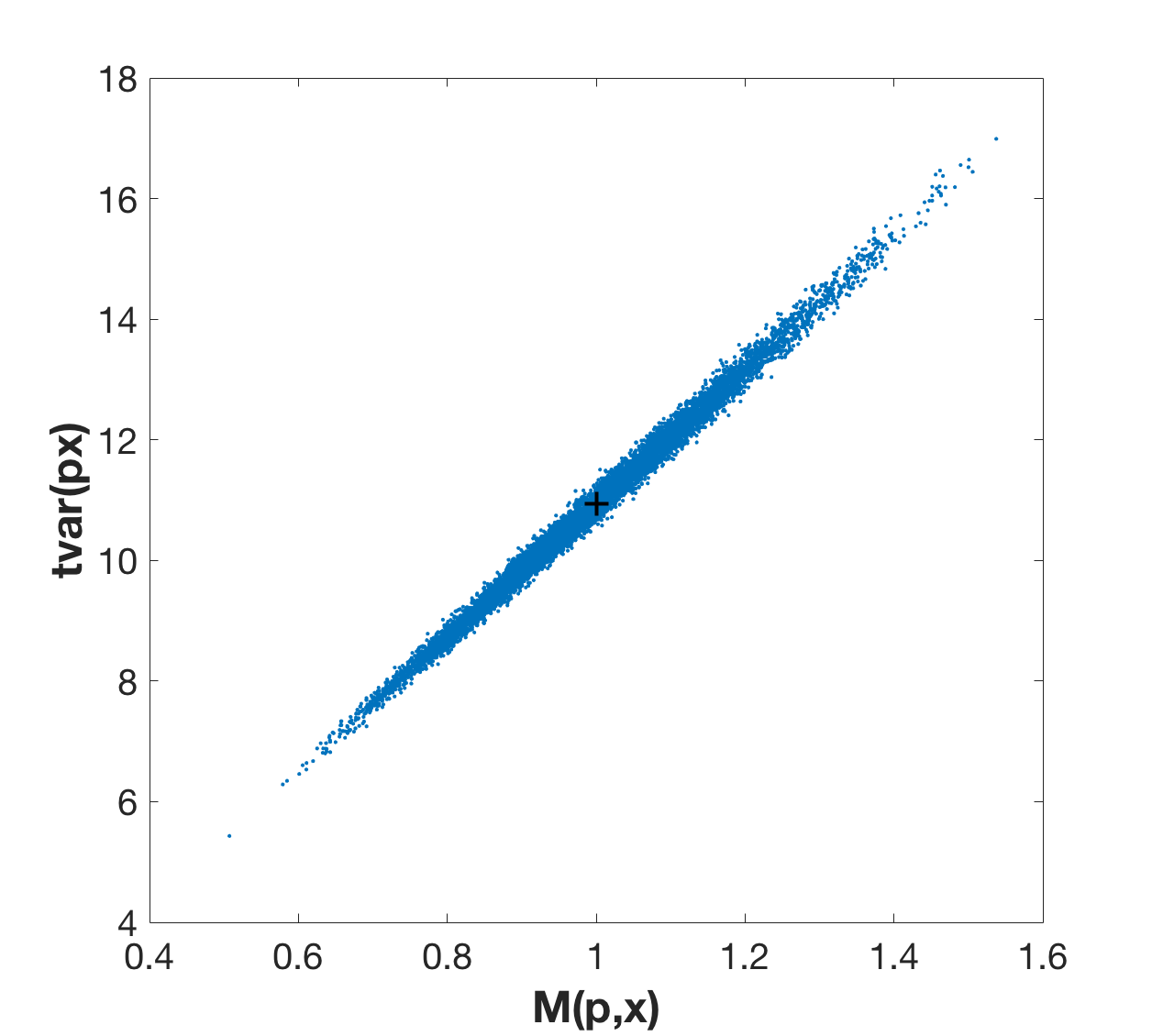}}
\subfigure[$d = 200$]{\includegraphics[width = 0.48\textwidth]{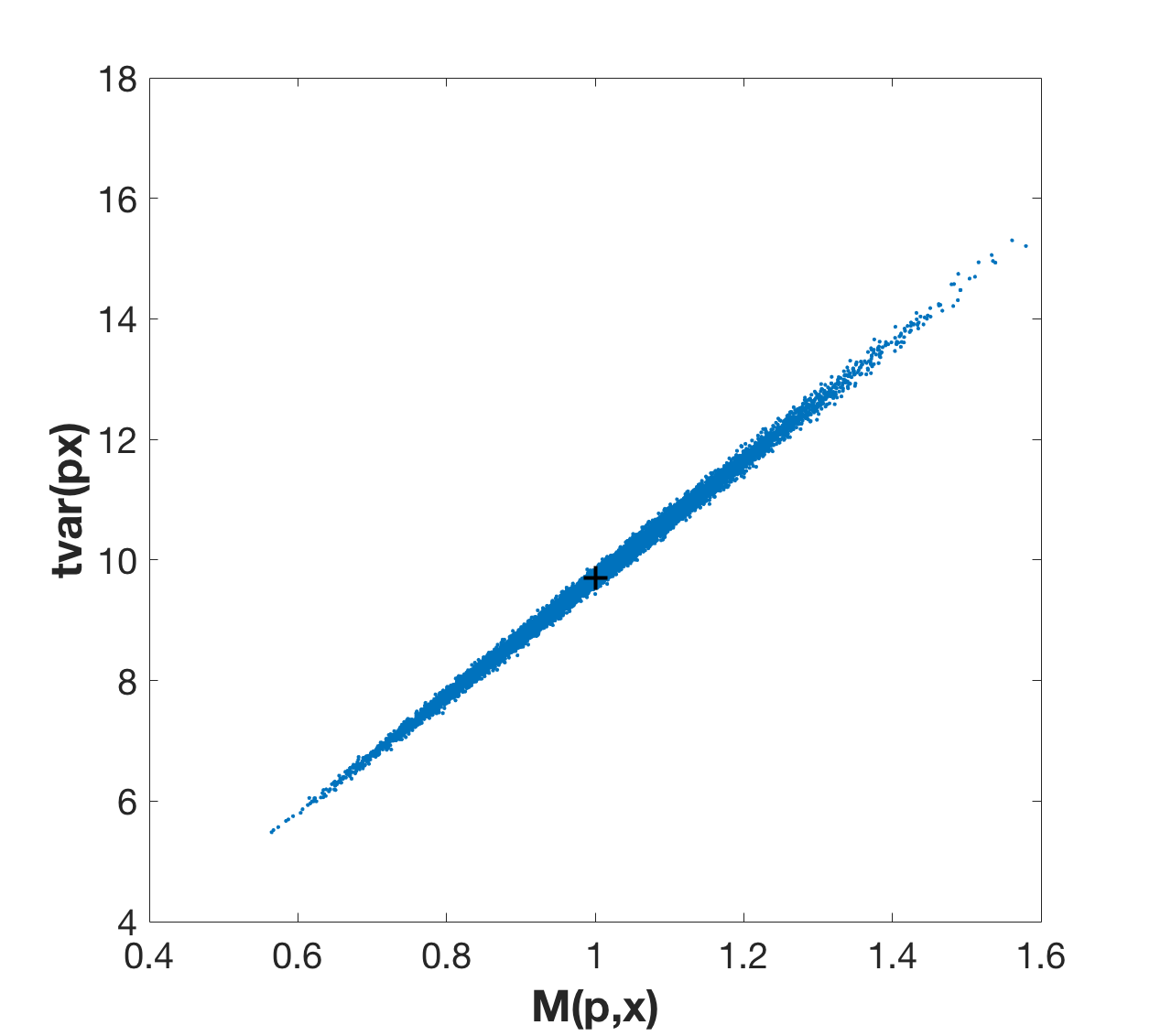}}
\subfigure[$d = 500$]{\includegraphics[width = 0.48\textwidth]{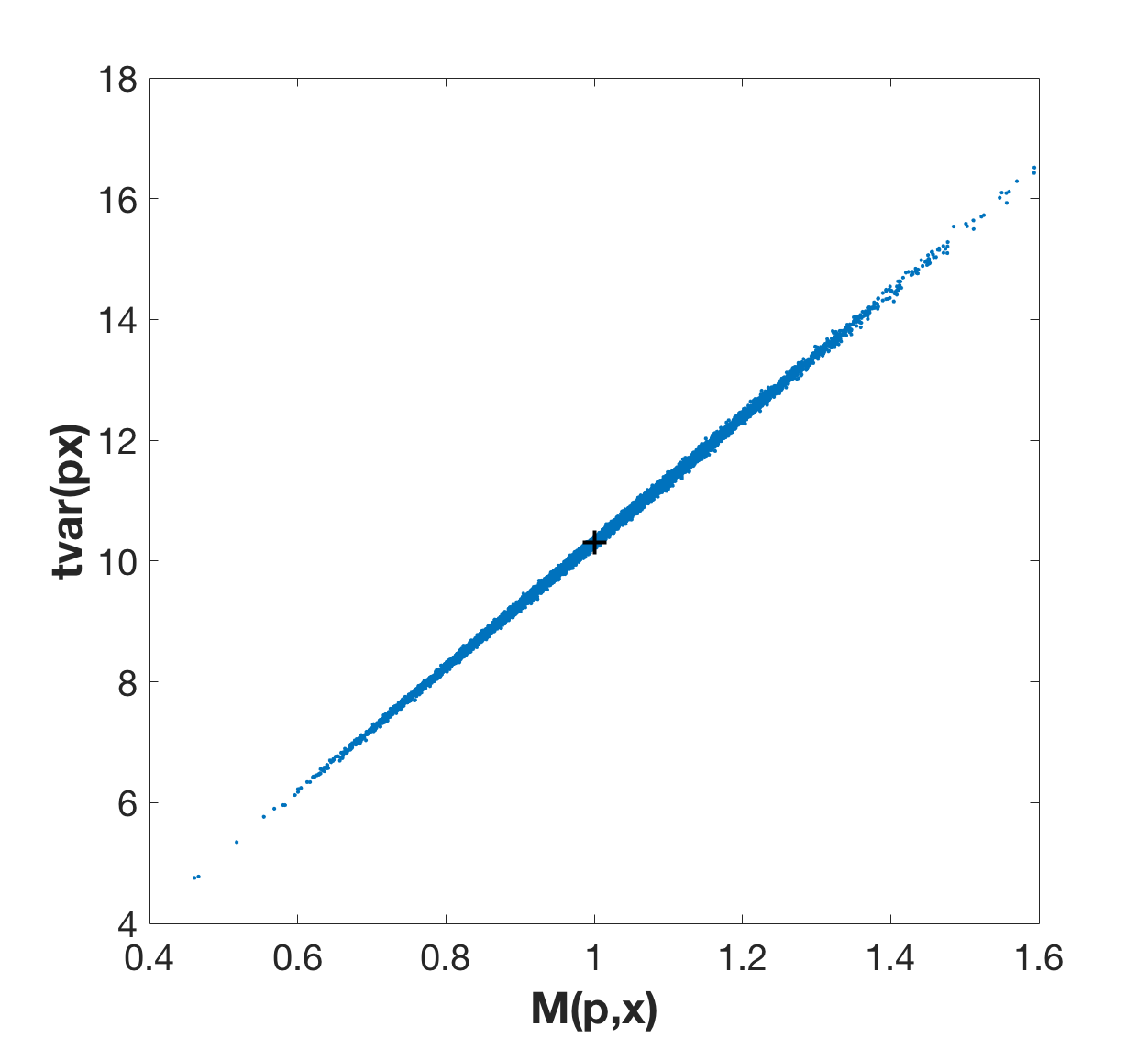}}
\caption{ For $x=\{x_i\}_{i=1}^{10}\subset\R^d$ with independent, normal distributed entries, we independently sample $10 000$ random projectors $p$ from $\lambda_{10,d}$ and plot $\mathcal{M}(p,x)$ versus $\tvar(px)$. The expectation values with respect to $P \sim \lambda$ are marked with \small{+}. \normalsize The correlation is already $0.9916$ for $d = 50$ and grows further when $d$ increases, namely with values $0.9961, 0.9985, 0.9996$ for $d = 100,200,500$.}\label{figthm}
\end{figure}

%, few comments are in order:

The strong correlation for large dimensions $d$ in the second part of Theorem \ref{theorem} suggests that increasing $\tvar(Px)$ may also lead to increasing $\mathcal{M}(P,x)$, see Figure \ref{figthm} for illustration. Thus, large projected total variance $\tvar(Px)$ and the preservation of scaled pairwise distances, i.e. $\mathcal{M}(P,x)$ being close to $1$, are competing properties. As discussed in Section \ref{comp}, the choice of which kind of information is favorable to preserve, depends on the data and the task; e.g. denoising (O1) and nearest neighbor classification (O2). PCA and random projections are extreme in preserving either O1) or O2). We will heuristically study the behavior of orthogonal projections balancing both objectives in the next section and will state a numerical experiment where a balancing projector yields highest classification accuracy.

\begin{remark}
The second part of Theorem \ref{theorem} relates to the well-known fact that random vectors in high dimensions are almost orthogonal, \cite{Ball:1997dw}, and standard concentration of measure arguments may lead to more quantitative statements, cf.~\cite{Vershynin:2012fk}.
\end{remark}

%%%%%%%%%%%%%%%%%%%%%%%%%%%%%%%%%%%%%%%%%%%%%%%%%%%%%%%%%%%%%%%%%
\section{Preparations for numerical experiments}\label{sec:3}
For the numerical experiments we need finite sets of projectors that represent the overall space well, i.e.~cover $\G_{k,d}$ properly.
\subsection{Optimal covering sequences} \label{optcov}
Let the \emph{covering radius} of a set $\{p_l\}_{l=1}^n\subset \G_{k,d}$ be denoted by
\begin{equation}\label{eq:def cov}
\varrho(\{p_l\}_{l=1}^n):=\sup_{p\in \G_{k,d}} \min_{1\leq l\leq n} \|p-p_l\|_{\F},
\end{equation}
where $\|\cdot\|_{\F}$ is the Frobenius norm. The smaller the covering radius, the better the set $\{p_l\}_{l=1}^n$ represents the entire space $\G_{k,d}$. I.e., there are smaller holes and the points $\{p_l\}_{l=1}^n$ are better distributed within $\G_{k,d}$. Following Lemma \ref{lemma JL} we can connect finite sets of projections and their covering radius to the near-isometry property:
\begin{lemma}\label{GJL}
Let $\{p_l\}_{l=1}^{n}\subset \G_{k,d}$ and denote $\varrho:=\varrho(\{p_l\}_{l=1}^n)$. For any $0 < \epsilon < 1$, any $m,k,d\in\N$ with 
\begin{equation*}
\frac{4 \log(m)}{\epsilon^2/2 -\epsilon^3/3}\leq k\leq d,
\end{equation*}
and any $ \{x_i\}_{i=1}^m\subset\R^d$, there is $l_0\in\{1,\ldots,n\}$ such that
\begin{equation}\label{covjl}
(1 - \delta) \norm{x_i - x_j}^2 \leq \tfrac{d}{k}\norm{p_{l_0}(x_i) - p_{l_0}(x_j)}^2 \leq (1 + \delta) \norm{x_i -x_j}^2, i<j,
\end{equation}
where $\delta = \epsilon + 2\varrho \sqrt{\frac{(1 + \epsilon)d }{k}} + \frac{d }{k}\varrho^2$.
\end{lemma}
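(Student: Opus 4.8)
The plan is to derive the statement from the Johnson--Lindenstrauss Lemma \ref{lemma JL} by a perturbation estimate whose size is governed by the covering radius $\varrho$.

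First I would invoke Lemma \ref{lemma JL} for the given data $\{x_i\}_{i=1}^m$ and parameters $\epsilon,k,d$: since $\tfrac{4\log(m)}{\epsilon^2/2-\epsilon^3/3}\le k\le d$, there exists a projector $p\in\G_{k,d}$ with
\[
(1-\epsilon)\norm{x_i-x_j}^2\le\tfrac{d}{k}\norm{p(x_i)-p(x_j)}^2\le(1+\epsilon)\norm{x_i-x_j}^2\qquad(i<j).
\]
By the definition \eqref{eq:def cov} of $\varrho$, there is an index $l_0\in\{1,\dots,n\}$ with $\norm{p-p_{l_0}}_{\F}\le\varrho$; note that this single $l_0$ is chosen once and serves all pairs simultaneously, since it depends only on $p$. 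It then remains to show that $p_{l_0}$ still satisfies a near-isometry with the enlarged distortion $\delta$.

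The key step is a pointwise perturbation bound. Fixing $i<j$ with $x_i\neq x_j$ (the case $x_i=x_j$ being trivial) and setting $y:=x_i-x_j$, the triangle inequality and $\norm{p-p_{l_0}}_{\mathrm{op}}\le\norm{p-p_{l_0}}_{\F}\le\varrho$ give $\bigl|\norm{p_{l_0}(y)}-\norm{p(y)}\bigr|\le\varrho\norm{y}$. Squaring $\norm{p_{l_0}(y)}\le\norm{p(y)}+\varrho\norm{y}$ and substituting the JL upper bound $\norm{p(y)}\le\sqrt{(1+\epsilon)k/d}\,\norm{y}$ yields, after multiplication by $d/k$,
\[
\tfrac{d}{k}\norm{p_{l_0}(y)}^2\le\Bigl((1+\epsilon)+2\varrho\sqrt{\tfrac{(1+\epsilon)d}{k}}+\tfrac{d}{k}\varrho^2\Bigr)\norm{y}^2=(1+\delta)\norm{y}^2,
\]
which is the asserted upper bound. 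For the lower bound, if $\norm{p(y)}-\varrho\norm{y}\ge0$, I would square $\norm{p_{l_0}(y)}\ge\norm{p(y)}-\varrho\norm{y}$, discard the nonnegative term $\varrho^2\norm{y}^2$, and use both $\norm{p(y)}^2\ge(1-\epsilon)\tfrac{k}{d}\norm{y}^2$ and $\norm{p(y)}\le\sqrt{(1+\epsilon)k/d}\,\norm{y}$ to obtain $\tfrac{d}{k}\norm{p_{l_0}(y)}^2\ge(1-\delta)\norm{y}^2$; if instead $\norm{p(y)}-\varrho\norm{y}<0$, then $\varrho^2>(1-\epsilon)k/d$, hence $\tfrac{d}{k}\varrho^2>1-\epsilon$ and $\delta>1$, so $1-\delta<0\le\tfrac{d}{k}\norm{p_{l_0}(y)}^2$ and the bound holds trivially.

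I do not anticipate a real obstacle here; the argument is a one-line application of Lemma \ref{lemma JL} followed by elementary estimates. The only points needing care are the case distinction on the sign of $\norm{p(y)}-\varrho\norm{y}$ in the lower bound, and the bookkeeping that makes the three error contributions assemble exactly into $\delta=\epsilon+2\varrho\sqrt{(1+\epsilon)d/k}+\tfrac{d}{k}\varrho^2$. Collecting the two bounds over all $i<j$ for this fixed $l_0$ gives \eqref{covjl}.
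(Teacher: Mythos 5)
Your proof is correct and takes essentially the same approach as the paper, which only records the key perturbation estimate $\norm{p_{l_0}x - px}\leq \norm{p_{l_0}-p}_{\F}\norm{x}\leq\varrho\norm{x}$ and omits the remaining ``standard computations.'' Your write-up supplies exactly those omitted details: the single $l_0$ chosen from the covering, the squaring of the perturbed norms, the correct assembly of the three error terms into $\delta=\epsilon+2\varrho\sqrt{(1+\epsilon)d/k}+\tfrac{d}{k}\varrho^2$, and the (harmless) case distinction when $\norm{p(y)}-\varrho\norm{y}<0$.
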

\begin{proof}
Given an arbitrary projector $p\in\G_{k,d}$, there is an index $l_0 \in \{1, \dots, n\}$ such that 
\begin{equation*}
 \|p_{l_0}x - px \|\leq \|p_{l_0} - p \|_{\F} \|x\|\leq \varrho \|x\|,\quad x\in\R^d.
\end{equation*}
From here, standard computations imply Lemma \ref{GJL}. We omit the details.
\end{proof}
The accuracy of the near-isometry property in \eqref{covjl} depends on the covering radius. Therefore, a set $\{p_l\}_{l=1}^n\in\G_{k,d}$ with a small covering radius $\varrho$ is more likely to contain a projector with better preservation of pairwise relative distances. According to \cite{Breger:2016rc}, it holds that \footnote{We use the symbols 
  $\lesssim$ and $\gtrsim$ to indicate that the corresponding inequalities hold up to a positive constant factor on the respective right-hand side. The notation $\asymp$ means that both relations $\lesssim$ and $\gtrsim$ hold.} $\varrho\gtrsim n^{-\frac{1}{k(d-k)}}$, and we shall see next, how to achieve this lower bound. 

A set of projectors $\{p_l\}_{l=1}^n\subset\G_{k,d}$ is called a \emph{$t$-design} if the associated normalized atomic measure $\frac{1}{n}\sum_{l=1}^n \delta_{p_l}$ is a cubature measure of strength $t$ (see Definition \ref{tdes}), see \cite{Seymour:1984bh} for general existence results. Any sequence of $t_i$-designs $\{p^i_l\}_{l=1}^{n_i}\subset\G_{k,d}$ with $t_i\rightarrow\infty$ satisfies
\begin{equation}\label{eq:t is opt}
\varrho_i\asymp t_i^{-1},
\end{equation}
and moreover, the bound $n_i\gtrsim t_i^{k(d-k)}$ holds, cf.~\cite{Harpe:2005fk,Breger:2016rc}. To relate $n_i$ with $\varrho_i$ via $t_i$, a sequence of $t_i$-designs $\{p^i_l\}_{l=1}^{n_i}\subset\G_{k,d}$ is called a \emph{low-cardinality design sequence} if $t_i\rightarrow\infty$ and 
\begin{equation}\label{eq:n t etc}
n_i\asymp  t_i^{k(d-k)}, \quad i=1,2,\ldots
\end{equation}
For their existence and numerical constructions, we refer to \cite{Etayo:2016qq} and  \cite{Breger:2016vn, Breger:2016rc}. According to \cite{Breger:2016rc}, see also \eqref{eq:t is opt} and \eqref{eq:n t etc}, any low-cardinality design sequence $\{p^{i}_l\}_{l=1}^{n_i}$ covers asymptotically optimal, i.e., 
\begin{equation*}
\varrho_i\asymp n_i^{-\frac{1}{k(d-k)}}.
\end{equation*}
Benefiting from the covering property, we will use low-cardinality design sequences as a representation of the overall space of orthogonal projectors $\G_{k,d}$.

\subsection{Linear least squares fit}\label{lsfit}
With the linear least squares fit we can directly gain information about the relation between $\mathcal{M}(p,x)$ and $\tvar(px)$ for a given data set $x=\{x_i\}_{i=1}^m\subset \R^{d }$ when $p$ varies.  
Given the two samples
\begin{equation}\label{eq:two quant}
\{\tvar(p_1x),\ldots,\tvar(p_nx)\},\quad \{\mathcal{M}(p_1,x),\ldots,\mathcal{M}(p_n,x)\},
\end{equation}
the linear least squares fitting provides the best fitting straight line, 
\begin{equation*}
\tvar(p_lx) \approx s \cdot \mathcal{M}(p_l,x) + \gamma,\quad l=1,\ldots,n,
\end{equation*}
where $s$ and $\gamma$ are determined by the sample variances and the sample covariance. If $\{p_l\}_{l=1}^n$ is a $2$-design, then the sample (co)variances coincide with the respective population (co)variances for $P\sim\lambda_{k,d}$, see Appendix \ref{tdes2} for further details. It follows that 
\begin{align}
s &= \frac{\Cov(\mathcal{M}(P,x),\tvar(Px))}{\Var(\mathcal{M}(P,x))} \text{\quad with }P\sim\lambda_{k,d}, \label{spop} \\
 \gamma & =\tfrac{k}{d}\tvar(x) - s. \label{gammapop}
\end{align}
The quantities $s$ and $\gamma$ can be directly computed, where $\tvar(x)$ is given by \eqref{eq:total variance} and the covariances are stated in Corollary \ref{covariances}. Note that \eqref{spop} and \eqref{gammapop} are now independent of the particular choice of $\{p_l\}_{l=1}^n$. 

The correlation between the two samples \eqref{eq:two quant} yields additional information about their relation. As before, if $\{p_l\}_{l=1}^n$ is a $2$-design, then the sample correlation coincides with the population correlation \eqref{eq:bound from below cov} for $P\sim\lambda_{k,d}$, cf.~Appendix \ref{tdes2}. High correlation for a specific data set $x$ suggests that random projections and PCA preserve competing properties, whose benefits need to be assessed for the specific subsequent task.

\section{Numerical experiments in pattern recognition}\label{sec:4}
%Now we have all theoretical tools provided to be able to analyze the choice of projections for a given data set according to the preservation of pairwise distances and the maximization of total variance. 
We investigate the impact on classification accuracy when applying specific orthogonal projections to input data. The real world data chosen yields a straightforward classification task, serving as a toy example for comparing the accuracy of several projected input data in simple learning frameworks. Projectors are chosen from a $t$-design in view of $\tvar(px)$ and $\mathcal{M}(p,x)$. For all computations made in this Section the `Neural Network' and `Statistics and Machine Learning' toolboxes in MatlabR2017a are used. 

We use the publicly available \texttt{iris} data set from the UCI Repository of Machine Learning Database suitable for supervised classification learning. It consists of $3$ classes with $50$ instances each, where each class refers to a type of iris plant. The instances are described by $4$ features resulting in the input samples $\{x_i\}_{i=1}^{150}\subset\R^4$ and target samples $\{y_i\}_{i=1}^{150}\subset \{0,1\}^3$. For comparison we classify the diverse input data with support vector machine (SVM) and $3$-layer neural networks (NN) with $5$ and $10$ hidden units (HU). 

\subsection{Choice of orthogonal projection}
In the experiment we use projections $p \in \G_{2,4}$ reducing the original dimension from $d=4$ to $k=2$. As a finite representation of the overall space, we use a t-design of strength $14$ from a low-cardinality sequence (see Section \ref{optcov}) consisting of $8475$ orthogonal projectors. Note that the dimension reduction in practice takes place by applying $q\in \mathcal{V}_{k,d}$ with $q^\top q = p\in\G_{k,d}$, where 
\begin{equation*}
\mathcal{V}_{k,d}:=\{ q\in\R^{k\times d} : qq^\top = I_k\}
\end{equation*}
denotes the Stiefel manifold. When taking norms, $p$ and $q$ are interchangeable, i.e., $\|q(x)\|^2 = \|p(x)\|^2$, for all $x\in\R^d$. Therefore we can use w.l.o.g. the theory developed for $p$. 

The projections are chosen in a deterministic manner viewing the previously described competing properties. In Figure \ref{resultsplot} the three quantities $\tvar(px)$, $\mathcal{M}(p,x)$ and $\mathcal{V}(p,x)$ are pairwise plotted for all projectors in $\{p_l\}_{l=1}^{8475}$. For comparison we choose the following projections $p \in \{p_l\}_{l=1}^{8475} \subset \G_{2,4}$, see Figure 4.\ref{figa} for a visualization.
\begin{itemize}
\item[$p_{\times}$] \ \ closest to the expected values $1$ and $\tfrac{k}{d} \tvar(x)$ (see \eqref{exptvar} and \eqref{expM}), 
\item[$p_{\Diamond}$] \ \ preserving $\mathcal{M}(p,x) \approx 1$ and maximizing $\tvar(px)$,
\item[$p_\text{\scalebox{0.8}{$\square$}}$] \ \ preserving $\mathcal{M}(p,x) \approx 1$ and minimizing $\tvar(px)$,
\item[$p_\text{\scalebox{0.6}{$\bigcirc$}}$] \ \ $\tvar(px) \approx \tvar(p_{\Diamond} x)$ and maximizing $\mathcal{M}(p,x)$,
\item[$p_{\Huge \star}$] \ \ minimal $\tvar(px)$,
\item[$p_{*}$] \ \ maximal $\tvar(px)$ (PCA).
\end{itemize}

\subsection{Results}

\begin{figure}
\subfigure[$\{\mathcal{M}(p_l,x),\tvar(p_l x)\}_{l = 1}^{8475}$]{
\includegraphics[width=.47\textwidth]{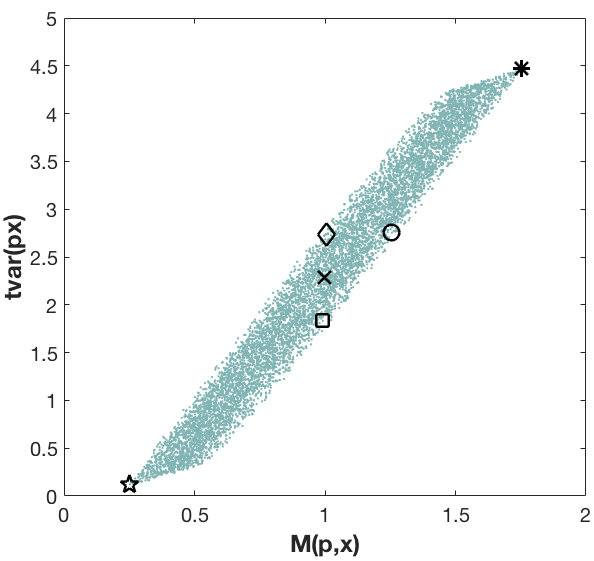} \label{figa}}
\subfigure[least squares fit]{
\includegraphics[width=.47\textwidth]{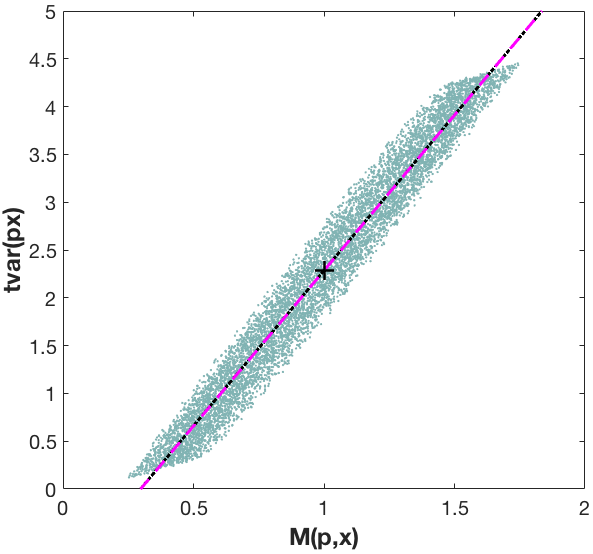} \label{figb} } 
%\subfigure[$\{\mathcal{M}(p_l,x),\mathcal{V}(p_l,x)\}_{l = 1}^{8475}$]{
%\includegraphics[width=.47\textwidth]{MViris} }
%\subfigure[$\{\mathcal{V}(p_l,x),\tvar(p_l x)\}_{l = 1}^{8475}$]{
%\includegraphics[width=.47\textwidth]{tvarViris} }
\caption{Projections $\{p_l\}_{l=1}^{8475} \subset G_{2,4}$ from a t-design of strength $14$ evaluated on the iris data set $x \subset \R^{4 \times 150}$.}
\label{resultsplot}
\end{figure}

In Figure 4.\ref{figb} we see the linear least squares fitting line, computed directly and via the slope and intercept as stated in \eqref{spop} and $\eqref{gammapop}$. The correlation coefficient \eqref{eq:bound from below cov} is $0.98$, which suggests that preserving the two properties is highly competing and needs to be balanced. 

In Table \ref{tableres} the classification results of the iris data are presented. We can see that in this comparison the projector {$p_{\Diamond}$}, which corresponds to preserving $\mathcal{M}(p,x) \approx 1$ and maximizing $\tvar(px)$, yields the highest and most robust results. It even yields better results than working with the original input data. The projections that preserve $\mathcal{M}(p,x) \approx 1$ but do not take care of the magnitude of the total variance yield much worse results. On the other hand, the projections that just focus on high total variance still do not yield as high results as the projection {$p_{\Diamond}$} that balances both properties. 

\begin{table}
	\begin{tabular}{l|c|c|c}
	Input/Method & \textbf {NN (10 HU)} & \textbf{ NN (5 HU) }&  \textbf{SVM} \\
	\hline
	\textbf{ x }& (97.6, 1.25) & (97.5, 2.29)  & (96.7, 0.15) \\
	\hline
	\rowcolor{lightgray}\textbf{ p$_{\Diamond}$x }& {\color{red} (98.4, 0.42)}& {\color{red} (98.3, 2.15) }& {\color{red} (97.3, 0.06)}\\
	\hline
	\textbf{ p$_{\times}$x }& (88, 1.73)& (87.9, 1.92)  & (87.6, 0.63) \\
	\hline
	\textbf{ p$_\text{\scalebox{0.8}{$\square$}}$x }& (87.3, 9.56) & (86.9, 10.81) & (87.7, 0.42)  \\
	\hline
	\textbf{ p$_\text{\scalebox{0.6}{$\bigcirc$}}$x }& (96.8, 2.74) & (96.7, 1.77) & (96, 0.17) \\
	\hline 
	 \textbf{ p$_{*}$x }(PCA) & (96.9, 1.36) & (96.5, 4.07) & (96, 0.37) \\ 
	 \hline 
	  \textbf{ p$_{\Huge{\star}}$x }& (62.1, 44.30) & (58.9, 70.78) & (56, 0.61)  \\ 
	\end{tabular}
	\vspace{1ex}
\caption{Classification results of iris data, when using projected input data in support vector machine (SVM) and shallow neural networks (NN). Mean and Variance ($\times 10^{-4}$) of $1000$ independent NN runs, $100$ independent runs with $10$-fold cross-validation in SVM.}
\label{tableres}
\end{table}

\begin{remark}\label{rem:zweiter}
Given a data set $x$, the projector $p_{\Diamond}$ is a good choice to balance both objectives O1) and O2). It can be computed by directly analyzing $\{\tvar(p_1x),\ldots,\tvar(p_nx)\}$ and $\{\mathcal{M}(p_1,x),\ldots,\mathcal{M}(p_n,x)\}$ of a finite covering $\{p_l\}_{l=1}^n$ of $\G_{k,d}$. For higher dimensions an accurate representation of $\G_{k,d}$, in order to heuristically select $p_{\Diamond}$, requires large computational costs. The least squares regression line for a $2$-design, as stated in \ref{gammapop}, can be directly computed with low computational cost. This offers helpful information about the interplay between O1) and O2). 
\end{remark}

%%%%%%%%%%%%%%%%%%%%%%%%%%%%%%%
\section{Augmented target loss functions}\label{general}
In the previous section projectors were applied to input features of shallow neural networks. In more complex architectures, such as deep neural networks, the adaption of weights can be viewed as optimization of input features, e.g. arising features can be used for transfer learning \cite{Yosinski:2014:TFD:2969033.2969197}. Whereas the input data is processed and optimized in each iteration, the target data stays usually unchanged during the whole learning process, serving as measure of accuracy. The representation of the target data is one key property for successful approximation with neural networks. Here, we will introduce a general class of loss functions, i.e. augmented target (AT) loss functions, that use projections and features to yield beneficial representations of the target space, emphasizing important characteristics. 

%%%
In optimization problems additional penalty terms are used for regularization or to enforce other beneficial constraints. In deep learning, weight decay (i.e. Tikhonov regularization) is a standard adaption of the loss function to that effect. Incorporating additional underlying information via features of the output/target data has been studied in diverse settings tailored to particular imaging applications. Perceptual loss functions have been used in \cite{Ledig2017PhotoRealisticSI} for image super-resolution, incorporating the comparison of high-level image features that arise from pretrained convolutional neural networks, i.e. the VGG-network \cite{vgg}. Deep perceptual similarity metrics have been proposed in \cite{Dosovitskiy:2016:GIP:3157096.3157170} for generating images, comparing image features instead of the original images. In \cite{Johnson2016PerceptualLF} a similar approach was successfully used for style transfer and super-resolution, adding a network that defines loss functions. Anatomically constrained neural networks (ACNN) have been introduced in \cite{acnn} and applied to cardiac image enhancement and segmentation. Their loss functions incorporate structural information by using autoencoders to gain features about lower dimensional parametrization of the segmentation. Brain segmentation was studied in \cite{semantic}, where information about the desired structure has been added in the loss function via an adjacency matrix. It was used for fine-tuning the supervised learned network with unlabeled data, reducing the number of abnormalities in the segmentation. 

The information of certain target characteristics can be very powerful and even replace the need of annotations in some tasks. In \cite{Stewart2017LabelFreeSO} label-free learning is approached by using just structural information of the desired output in the loss function instead of annotated target values. 

In the following, we will define a general framework of loss functions that add information of target characteristics via features and projections in supervised learning tasks.
%%%

%In the following we will use projections to yield good representations of output features that can be included in learning with deep neural networks.
%
%
%Using complex deep neural networks can be viewed as optimization of the input features. In the following we will use projections to yield good representations of output features that can be included in learning with deep neural networks. 
\subsection{General framework}

Let the training data be input vectors $\{x_i\}_{i=1}^m\subset \R^r$ with associated target values $\{y_i\}_{i=1}^m\subset \R^s$. We consider training a neural network 
\begin{equation*}
f_\theta:\R^r\rightarrow \R^{s},
\end{equation*}
where $\theta \in \R^N$ corresponds to the vector of all free parameters of a fixed architecture. In each optimization step for $\theta$, the network's output $\{\hat{y}_i=f_\theta(x_i)\}_{i=1}^m\subset\R^s$ is compared with the targets $\{y_i\}_{i=1}^m$ via an underlying loss function $L$. 

In contrast to ordinary learning problems with highly accurate target data, complicated learning tasks arising in many real world problems do not yield sufficient results when optimizing neural networks with standard loss functions $L$, such as the widely used mean least squares error 
\begin{equation}\label{eq:lsq}
L_{\MSE}(\{y_i\}_{i=1}^m,\{\hat{y}_i\}_{i=1}^m) := \frac{1}{m}\sum_{i=1}^m \norm {y_i - \hat{y}_i}^2.
\end{equation}
The training data may include important information that is obvious for humans, but poorly represented within the original target data and therefore lacks consideration in the learning process. To overcome this issue, we propose to add information tailored to the particular learning problem represented by additional features of the outputs and targets. 

%%%
%Using features of the output/target space in loss functions has especially been studied in non-general settings to improve particular problems. In \cite{Stewart2017LabelFreeSO} label-free learning is approached by using just structural information of the desired output in the loss function instead of annotated target values, i.e. learning from constraints. Perceptual loss functions have been used, incorporating feature maps of VGG-network in the loss function. These feature maps are also learned from a CNN. In \cite{acnn} anatomically constrained neural networks (ACNN) have been introduced. They act beneficially especially on corrupted images by using loss functions that incorporate structural information in the loss function. They propose the use of autoencoders to gain features about lower dimensional parametrisation of the segmentation and its underlying structers. Moreover, they suggest to incorporate features from T-L networks as regularizer.  

%%%
First, we select transformations
\begin{equation*}
T_j:\R^s\rightarrow \R^t, \quad j=1,\ldots,d,
\end{equation*}
to enable error estimation in transformed output/target spaces. Note that the transformations $T_j$ are not required to be linear. However, they should be piecewise differentiable to enable subsequent optimization of the loss function with gradient methods. We shall allow for additional weighting of the transformations $T_1, \dots, T_d$ to facilitate the selection of features for a specific learning problem. The previous sections suggest that orthogonal projections can provide favorable feature combinations, which essentially turns into a weighting procedure.
%, and randomness adds useful variations. Therefore, we shall integrate random projections into \eqref{featloss}.

To enable suitable projections, we stack the $d$ output/target features 
\begin{equation*}
T(y_i):=\begin{pmatrix}
    T_1 (y_i)^\top \\
    \vdots \\
    T_d (y_i)^\top
  \end{pmatrix}\in\R^{d\times t},
\end{equation*}
so that applying a projector $p\in\G_{k,d}$ to each column of $T(y_i)$ yields $p(T(y_i))\in\R^{d\times t}$. We now define the augmented target loss function with projections by
\begin{equation}\label{featloss}
L_{p}\big(\{y_i\},\{\hat{y}_i\}\big) := L(\{y_i\},\{\hat{y}_i\}) + \alpha \cdot \tilde{L}\big(\{p(T(y_i))\},\{p(T(\hat{y}_i))\}\big),
%\sum_{i}  \| p(T(y_i)) - p(T (\hat{y}_i)) \|_{\F}^2,
\end{equation}
where $\alpha > 0$ and $L$, $\tilde{L}$ correspond to conventional loss functions. Apparently, $L_p$ depends on the choice of $p\in\G_{k,d}$. The projection $p(T(y_i))$ weighs the previously chosen feature transformations $T(y_i)$. Standard choices of $L$ and $\tilde{L}$ are $L_{\MSE}$, in which case $L_p$ becomes
\begin{equation}\label{varlossfrob}
L_{p}\big(\{y_i\},\{\hat{y}_i\}\big)= \frac{1}{m}\sum_{i=1}^m \norm {y_i - \hat{y}_i}^2 + \alpha \cdot \frac{1}{m} \sum_{i=1}^m \| p(T(y_i)) - p(T (\hat{y}_i)) \|_{\F}^2.
\end{equation}

\begin{remark}
For $k=d$ the projector $p$ is the identity. In this case the transformations can map into different spaces, i.e. 
\begin{equation*}
T_j:\R^s\rightarrow \R^{t_j}, \quad j=1,\ldots,d,
\end{equation*}
and we can now write the standard augmented target loss function by 
\begin{equation}\label{AT}
L_{AT}\big(\{y_i\},\{\hat{y}_i\}\big) = \sum_{j=1}^d \alpha_j \cdot L^j \big(\{T_j(y_i)\},\{T_j(\hat{y}_i)\}\big),
\end{equation}
where $T_1$ corresponds to the identity function, $L^1,\ldots, L^d$ are common loss functions and $\alpha_1,\ldots, \alpha_d > 0$ are weighting parameters. 
\end{remark}

It should be mentioned that $\alpha$ resembles a regularization parameter. The actual minimization of \eqref{eq:lsq} among $\theta$ is usually performed through Tikhonov type regularization in many standard deep neural network implementations. The formulation \eqref{featloss} adds one further variational step for beneficial output data representation. 

\begin{remark}
Our proposed structure with target feature maps $T_1,\ldots,T_d$ as in \eqref{AT} relates to multi-task learning, which has been successfully used in deep neural networks \cite{Caruana1997}. It handles multiple learning problems with different outputs at the same time. In contrast to multi-task learning, we aim to solve a single problem but also penalize the error in transformed spaces enhancing certain target characteristics.
\end{remark}

For the projected feature transformations in the augmented target loss function it is not possible to identify a balancing projection $p$ heuristically (such as $p_\Diamond$ in Section \ref{sec:4}), because the output $y$ changes in each iteration when the loss function is called. In the following clinical numerical experiment we overcome this issue by using random projections and PCA in each optimization step and compare it to prior deterministic choices of projections. 
%Note that the PCA projector is derived from all $2tm$ columns of the matrices $T(y_1),\ldots,T(y_m)$ and $T(\hat{y}_1),\ldots,T(\hat{y}_m)$.  

%%%%%%%%%%%%%%%%%%%%%%%%%%%%%%%%%%%%%%%%%%%%%%%%%%%%%%%%%%  
\section{Application to clinical image data}\label{sec:appl retina} 
The first experiment is a clinical problem in retinal image analysis of the human eye, where the disruptions of the so-called photoreceptor layers need to be quantified in optical coherence tomography images (OCT). The photoreceptors have been identified as the most important retinal biomarker for prediction of vision from OCT in various clinical publications, see e.g. \cite{octbio}. As OCT technology advances, clinicians are not able to look at each slice of OCT themselves (in mean they get 250 slices per patient and have 3-5 minutes/patients including their clinical examination). Therefore, automated classification of e.g. photoreceptor status is necessary for clinical guidance.

\subsection{Data and objective}
In this application, OCT images of different retinal diseases (diabetic macular edema and retinal vein occlusion) were provided by the Vienna Reading Center recorded with the Spectralis OCT device (Heidelberg Engineering, Heidelberg, Germany). Each patient's OCT volume consists of $49$ cross-sections/slices ($496 \times 512$ pixels) recorded in an area of $6 \times 6$ mm in the center of the human retina, which is the part of the retina responsible for vision. Each of the slices was manually annotated by a trained grader of the reading center. This is a challenging and time-consuming procedure that is not feasible in clinical routine but only in a research setting. The binary pixelwise annotations serve as target values, enabling a supervised learning framework. 

The objective is to accurately detect the photoreceptor layers and their disruptions pixelwise in each OCT slice by training a deep convolutional neural network with a suitable loss function. The learning problem is complicated by potentially inaccurate target annotations, as studies have shown that inconsistencies between trained graders are common, cf.~\cite{Shahrian}. Moreover, the learning task is unbalanced in the sense that there are many more slices showing none or very little disruptions. We shall observe that optimization with respect to standard loss functions performs poorly in regards to detecting disruptions. The augmented target loss function proposed in the previous section can enhance the detection.

\begin{figure}     
\subfigure[Healthy photoreceptor region]{\includegraphics[width = 0.49 \textwidth]{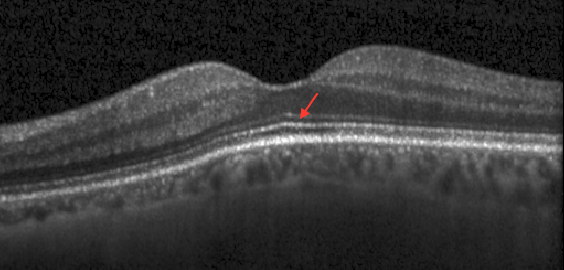}}
\subfigure[OCT slice plus manual annotation]{\includegraphics[width = 0.49 \textwidth]{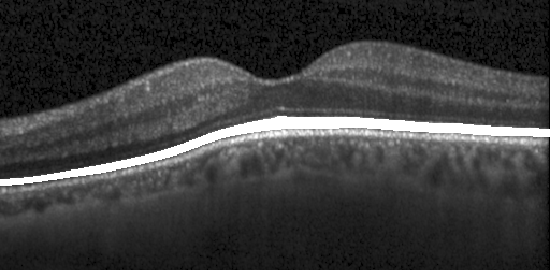}}
\subfigure[Disrupted photoreceptor region]{\includegraphics[width = 0.49 \textwidth]{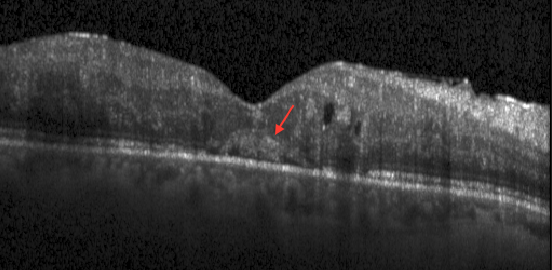}}
\subfigure[OCT slice plus manual annotation] {\includegraphics[width = 0.49 \textwidth]{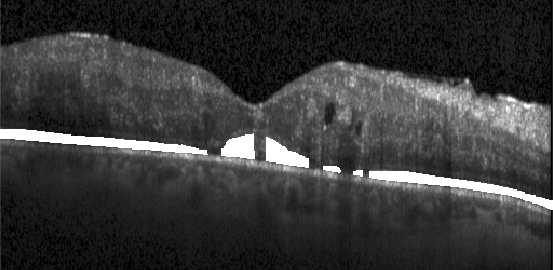}} 
\caption{OCT provides cross-sectional visualization of the human retina. }
\label{ex}
\end{figure}

\subsection{Convolutional neural network learning}
We implemented our experiments using Python 3.6 with Pytorch 1.0.0. A deep convolutional neural network $f_\theta$ is trained by applying the U-Net architecture reported in \cite{unet} with a sigmoid activation function and Tikhonov regularization. A set of $20$ OCT volumes ($980$ slices) from different patients with corresponding annotations are used for training, where $4$ volumes were used for calibration (validation set). Another $2$ independent test volumes were identified for evaluating the results, one without any disruptions in the photoreceptor layers, whereas the other one includes a high number of disruptions.

Each OCT slice is represented by a vector $x_i\in\R^{r}$ with $r=496\cdot 512$. The collection $\{x_i\}_{i=1}^m$ corresponds to all slices from the training volumes, i.e. $m=20\cdot 49$. Further matching the notation of the previous section, we have $r=s$ and $f_\theta:\R^r\rightarrow \R^r$ with binary target vectors $y_i\in \{0,1\}^r$. We observe that disruptions are not identified reliably when using the least squared loss function \eqref{eq:lsq}. To overcome this issues, we use the proposed augmented target loss function with least squared losses as stated in \eqref{varlossfrob}. 

To enhance disruptions within the output/target space, we heuristically choose $d=4$ local features of the original representation. They are derived from convolutions with $2$ edge filters, $T_1$ (Prewitt) and $T_2$ (Laplacian of Gaussian), and from $2$ Gaussian highpass filters, yielding $T_3$ and $T_4$. Note that these feature transformations keep the same size, i.e. $T_j:\R^r\rightarrow\R^r$ for $j=1,\ldots,d$. See Figure \ref{features} for example images. 

\begin{figure} \label{features}

\subfigure[Prewitt]{
      \includegraphics[width = 0.45\textwidth]{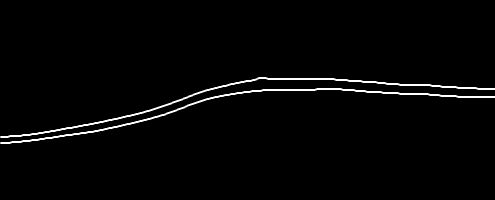}      
      \includegraphics[width = 0.45\textwidth]{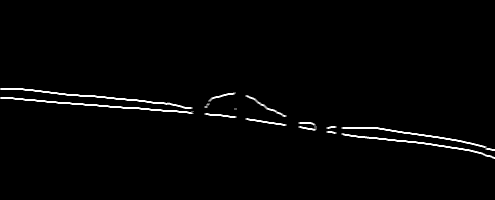} 
      }	
      
\subfigure[Laplacian of Gaussian (LoG)]{
       \includegraphics[width = 0.45\textwidth]{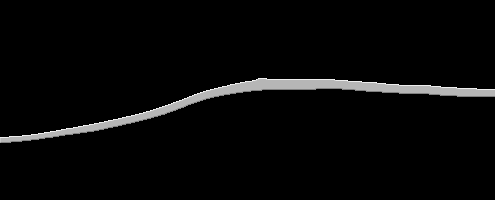} 
       \includegraphics[width = 0.45\textwidth]{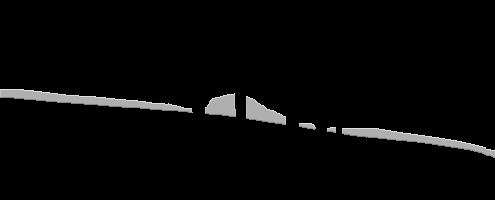} 
}

           \subfigure[Gaussian highpass (threshold = 40)]{
 		     \includegraphics[width = 0.45\textwidth]{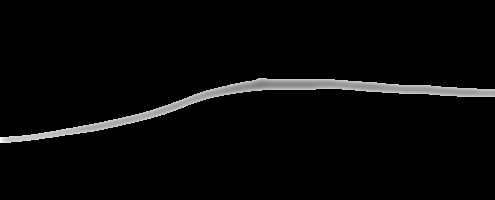} 
		       \includegraphics[width = 0.45\textwidth]{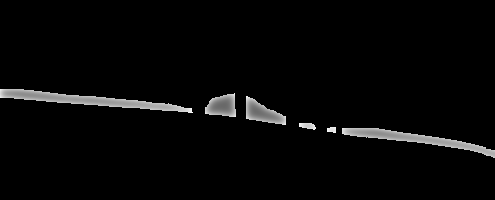} 
		       }	       
	
		       \subfigure[Gaussian highpass (threshold = 100)]{
 	\includegraphics[width = 0.45\textwidth]{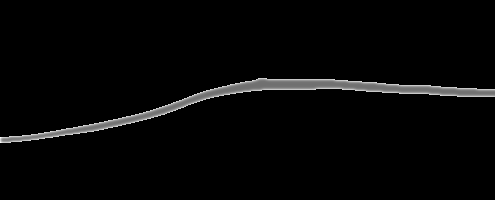}
	\includegraphics[width = 0.45\textwidth]{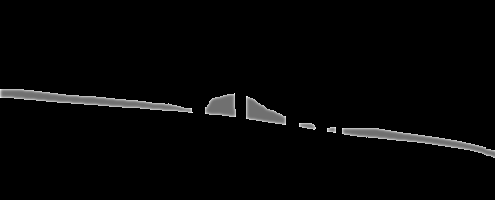} 
	}
\caption{Features on output and targets that enhance edges in different ways. It is not obvious which transformations are of most importance, weighting by projections can overcome this issue.}\label{fig:T}
\end{figure} 

We can derive several augmented target loss functions $L_p$ by choosing different $p \in\G_{k,d}$ for \eqref{featloss}. In this experiment we use the following projections: 
\begin{itemize}
\item $p=I_4$,
\item $\{p_l\}_{l=1}^{15}$, all projections from a t-design of strength $2$ $\subset \G_{2,4}$ (see \cite{Breger:2016vn}),
\item $p_{\PCA} \in \G_{2,4}$, projection determined by PCA on the training data,
\item $p_{\lambda_{2,4}}$, random projection chosen according to $\lambda_{2,4}$ in each mini-batch. 
\end{itemize}

\subsection{Results}
Since the detection problem is highly unbalanced we use precision/recall curves~\cite{davis2006relationship} for evaluating the overall performance of each loss function model. The area under the curve (AUC) was used as a numerical indicator of the success rate, \cite{Pabst:1999kr}. The higher the AUC the better the classification.  

The results of the different loss functions on the independent test set are stated in Table \ref{table}. Due to the imbalance within the data, the photoreceptor region is identified well, but disruptions are not identified reliably when using the least squared loss function \eqref{eq:lsq}. For $\alpha = 0.1$ all proposed augmented target loss functions $L_p$ clearly increase the success rate of the disruption quantification. Note that all projections are independent from the actual data set, except PCA that was computed beforehand on the training data. 

The features itself (i.e. $p = I_4$) improve the quantification and weighting them by projections increases the results even more: using the fixed projection $p_{12}$ from the t-design sequence $\{p_l\}_{l=1}^{15}$ on the output/target features yields the highest accuracy for photoreceptors and disruptions. This corresponds to the results of the previous sections, stating that depending on the particular data there are projections in the overall space acting beneficially. 

Since this projection generally cannot be found beforehand, using random projections in each loss function evaluation step is easier possible in practice and independent from the data. They can be computed very efficiently and randomization can generalize and robusten the information, cf. \cite{MAL-035}. In the following we will view a second classification problem based on spectrograms, where augmented target loss functions with random projections can improve the accuracy.

\begin{table}
\caption{Comparison of AUC values for photoreceptors segmentation and disruption detection.}
\label{table}
\vspace{0.4cm}
\centering
\begin{tabular}{ l | c | c}
  \hline
  \textbf{Loss function} & \textbf{Photoreceptors}& \textbf{Disruptions} \\
  \hline	
   \hline  
  $L_{\MSE}$ &	0.9720 &	0.4399  \\		
  \hline
  \hline
  $L_p$ & \\%\multicolumn{2}{c}{$\alpha = 0.5$}   \\
    \hline
  $p = I_4$ & 0.9736  & 0.4686 \\
  \hline
  $p_{\lambda_{2,4}}$   &  0.9746  & 0.4720\\
  \hline  
  $p_{\PCA}$    & 0.9716 &  0.5331 \\
  \hline  
  \rowcolor{lightgray} $p_{12} $  & {\color{red} \ 0.9755 }& {\color{red} 0.5558}\\
  \hline  
\end{tabular}
\end{table}

\section{Application to musical data}\label{sec:appl music} 
Here, the learning task is a prototypical problem in Music Information Retrieval, namely multi-class classification of musical instruments. In analogy to the MNIST problem in image recognition, this classification problem is commonly used as a basis of comparison for innovative methods, since the ground truth is unambiguous and sufficient annotated data are available. The input to the neural network are spectrograms of audio signals, which is the standard choice in audio machine learning. Spectrograms are calculated from the time signal using a short-time Fourier transform and taking the absolute value squared of the resulting spectra, thus yielding a vector for each time-step and a two-dimensional array, like an image, cf.~\cite{badogr17}. 

Reproducible code and more detailed information of our computational experiments can be found in the online repository \cite{:la}. 

\subsection{Data and objective}
The publicly available GoodSounds dataset \cite{GoodSounds} contains recordings of single notes and scales played by several single instruments. To gain equally balanced input classes we restrict the classification problem to $6$ instruments: clarinet, flute, trumpet, violin, alto saxophone and cello. Note that the recordings are monophonic, so that each recording yields one spectrogram that we aim to correctly assign to one of the $6$ instruments. 

After removing the silence \cite{sox, SoxSilenceTut}, segments from the raw audio files are transformed into log mel spectrograms \cite{librosa0.6.2}, so that we obtain images of time-frequency representations with size $100 \times 100$. One example spectrogram for each class of instruments is depicted in Figure \ref{fig:mels}. 

\begin{figure}     
\subfigure[Clarinet]{\includegraphics[width = 0.32 \textwidth]{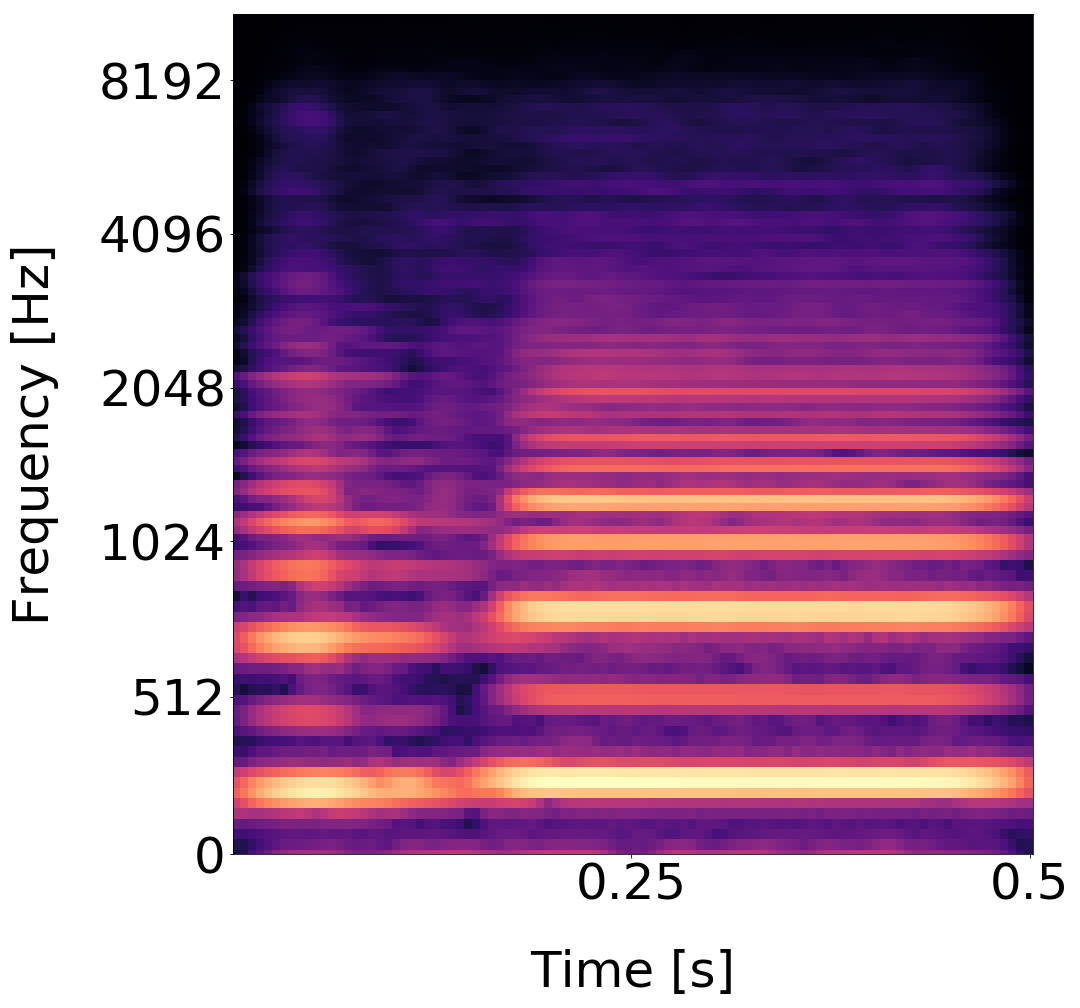}}
\subfigure[Flute] {\includegraphics[width = 0.32 \textwidth]{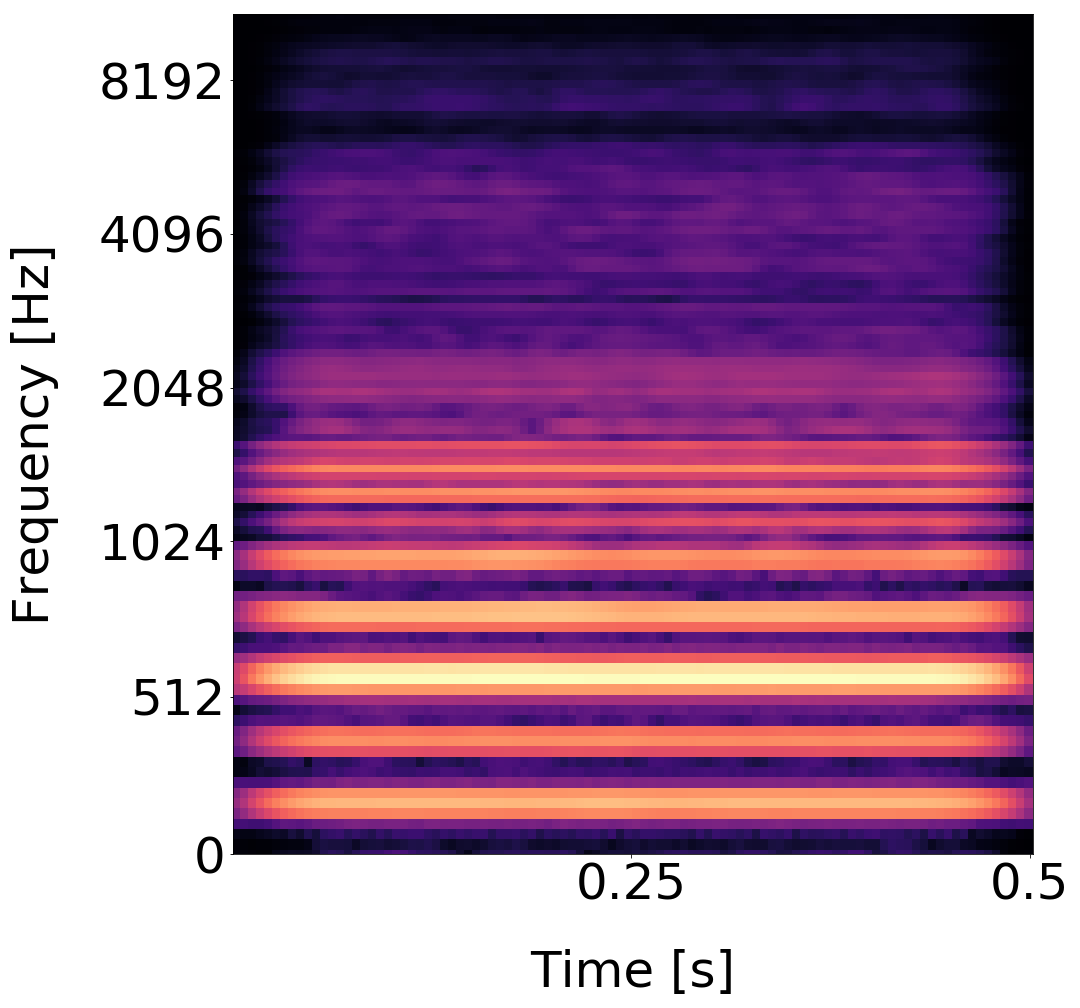}} 
\subfigure[Trumpet]{\includegraphics[width = 0.32 \textwidth]{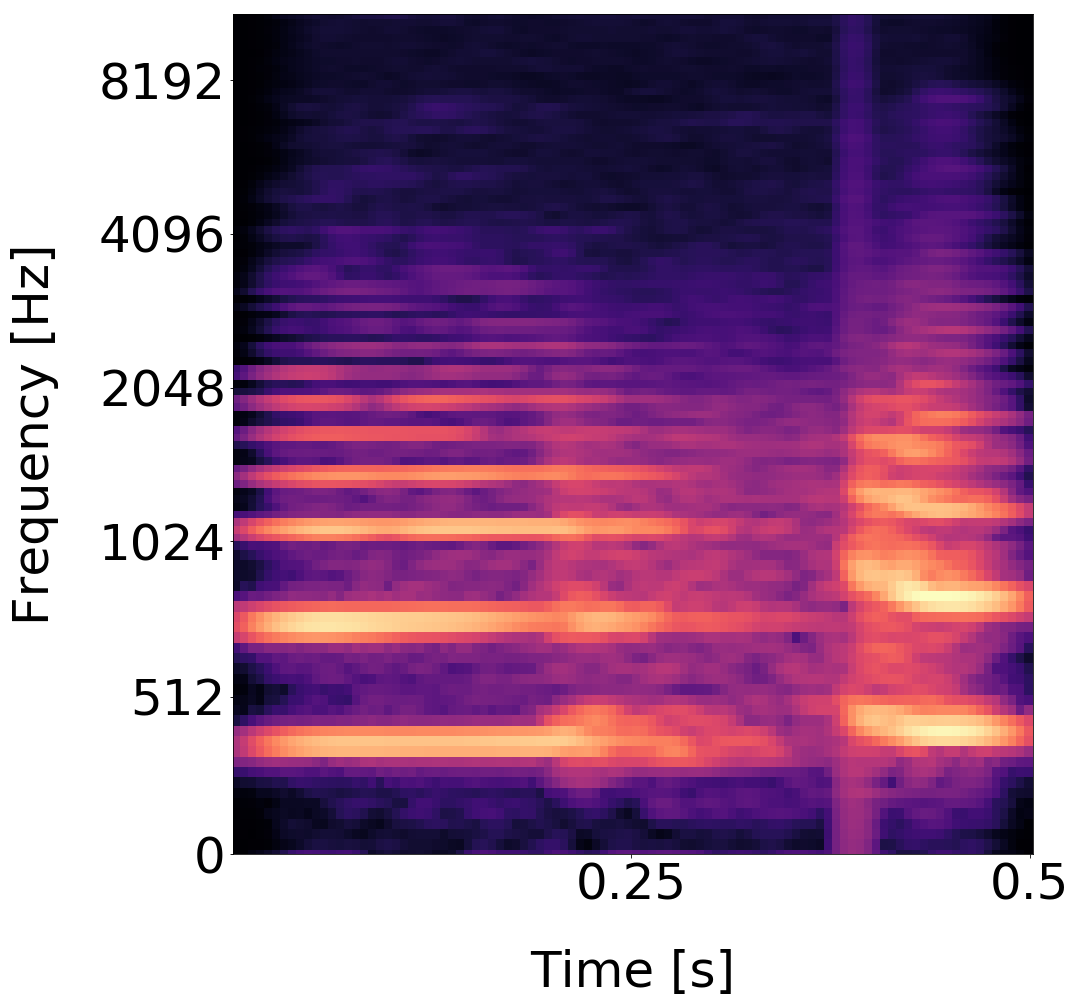}}

\vspace{2ex}
\subfigure[Violin]{\includegraphics[width = 0.32 \textwidth]{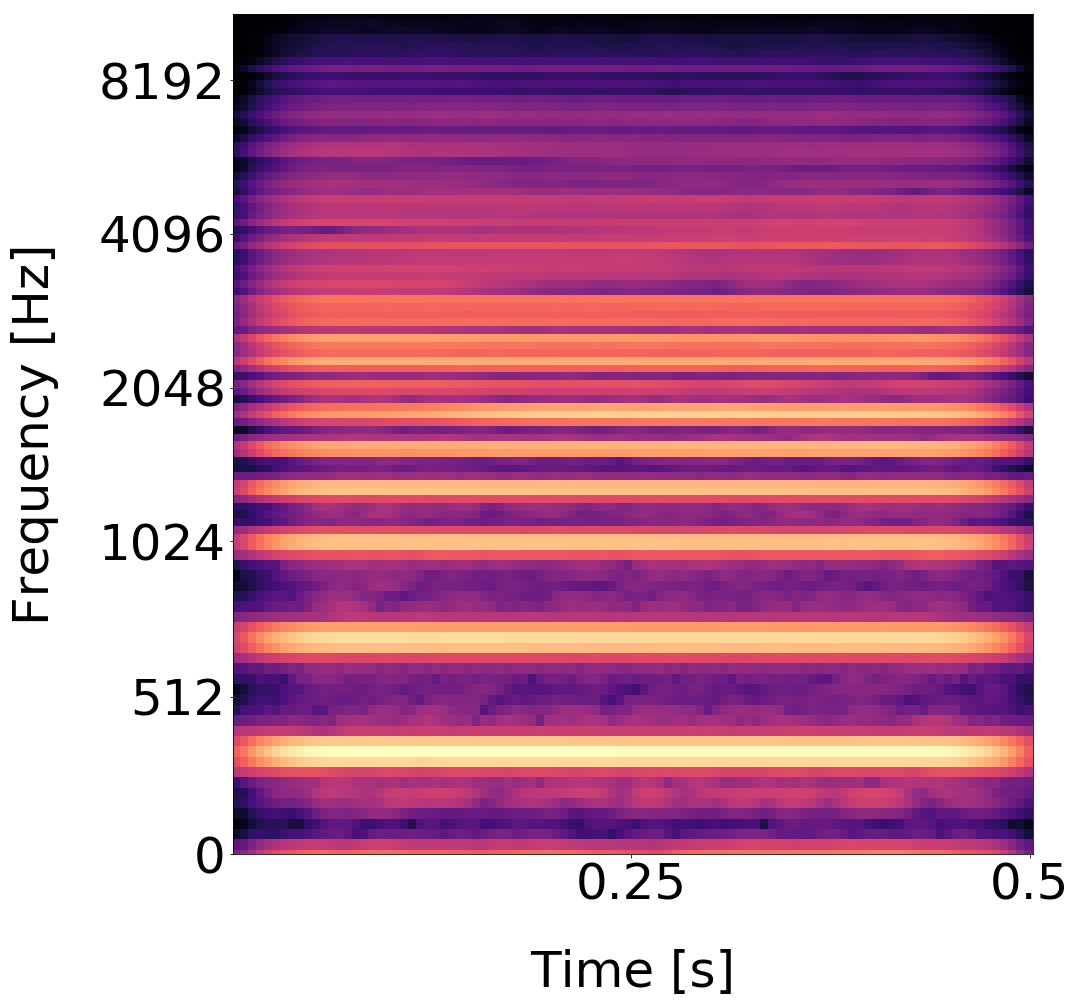}}
\subfigure[Alto Saxophone]{\includegraphics[width = 0.32 \textwidth]{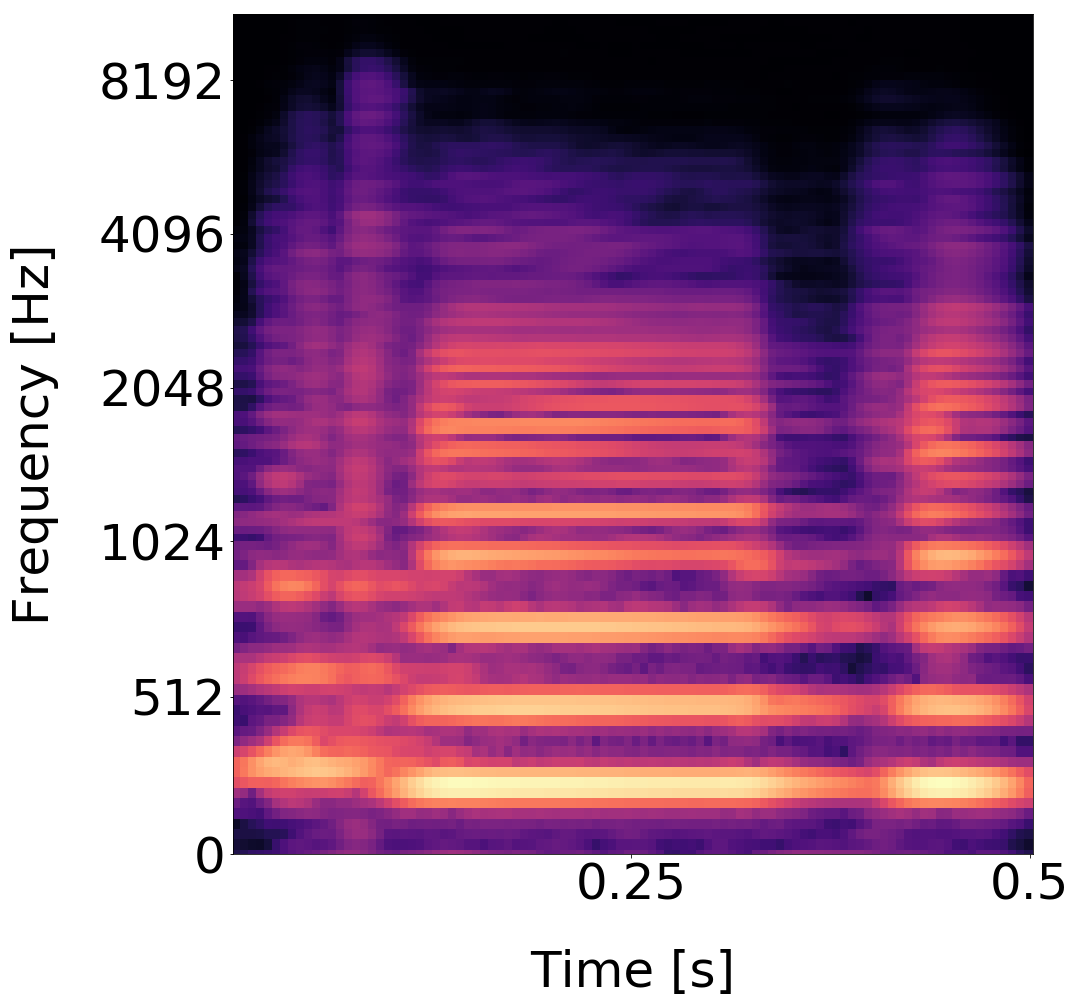}}
\subfigure[Cello]{\includegraphics[width = 0.32 \textwidth]{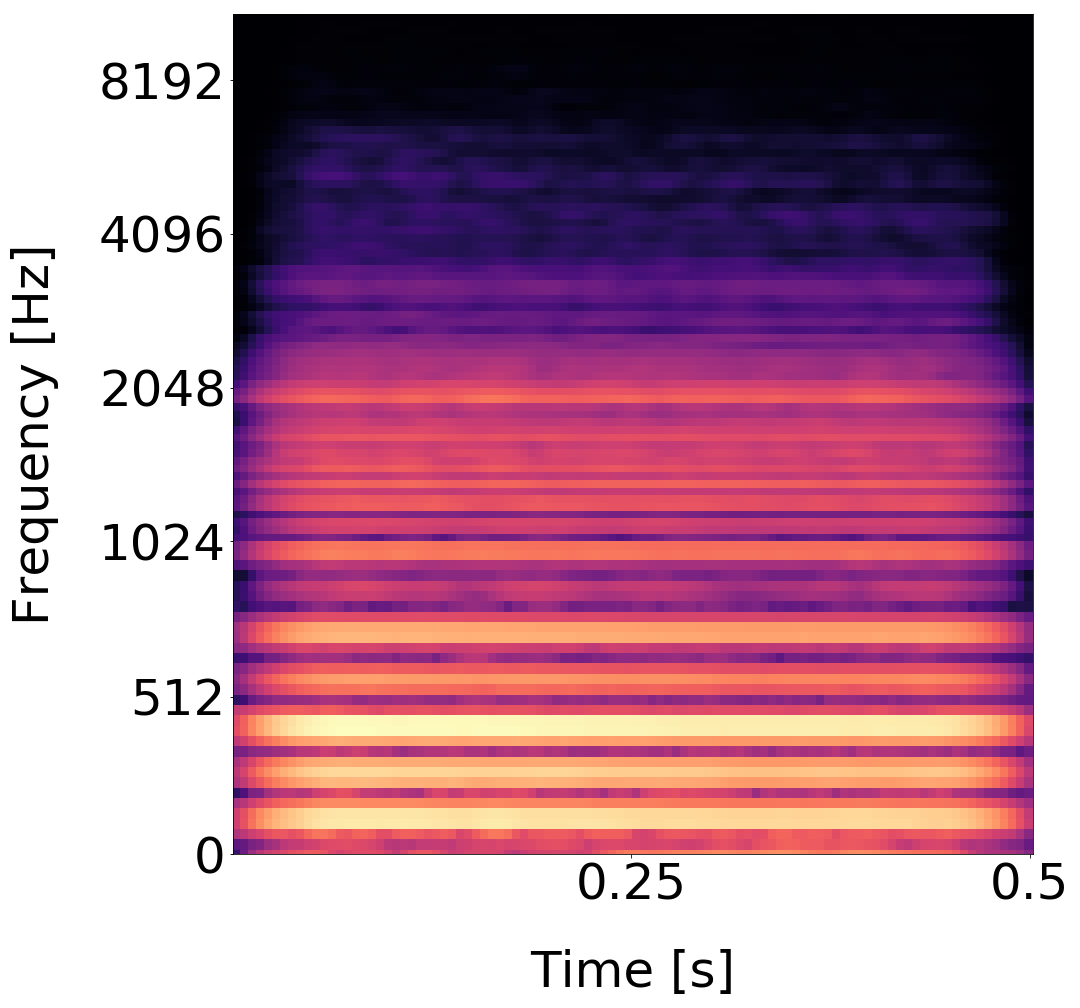}}
\caption{Log mel spectrograms of the $6$ different instruments. Intensities range from zero (black) to 1 (yellow).}
\label{fig:mels}
\end{figure}

\subsection{Convolutional neural network learning}
We implemented a fully convolutional neural network $f_{\theta}: \R^r \to [0,1]^s$, cf.~\cite{FullyConvolutional}, where $r = 100 \times 100$ and $s = 6$, in Python 3.6 using Keras 2.2.4 framework \cite{chollet2015keras} and trained it on the Nvidia GTX 1080 Ti GPU. The data is split into $140\,722$ training, $36 \,000$ validation and $36\,000$ independent test samples. We heuristically choose $d = 16$ output features arising directly from the particular output class. The transformations $T_1,\dots, T_{16}$, with $T_j: \R^6 \to \R$ for $j = 1, \dots, 16$, are then given by the inner product of the output/target and the feature vectors. Amongst others the features are chosen from the enhanced scheme of taxonomy \cite{von1961classification} and from the table of frequencies, harmonics and under tones \cite{FrequencyRanges}. We use the proposed augmented target loss function $L_p$ \eqref{featloss}, where $L_1$ corresponds to the categorical-cross-entropy loss \cite{Zhang:2018th} and $L_2$ to the mean squared error as in \eqref{varlossfrob}. We consider here two choices of $p$: the identity $I_{16}$ and random projectors $p\sim\lambda_{6,16}$ in $\G_{6,16}$. 

The deep learning model is sensitive to various hyper-parameters, including $\alpha$ and $p$, in addition to conventional parameters, such as the number of convolutional kernels, learning rate and the parameter $\beta$ for Tikhonov regularization. To find the best choices in a fair trial we utilize a~random hyper-parameter search approach, where we train $60$ models and select the $3$ best ones for a more precise search over different $\alpha$ in the augmented target loss function and $\beta$ for Tikhonov regularization. This results in $212$ models that are evaluated on the training and validation set. 
Finally, we select the best model based on the accuracy of the validation set and evaluate it on the independent test set. For comparison we also evaluate this model with no Tikhonov regularization, i.e. $\beta = 0$, see Table \ref{table:1}. 

\begin{table}
\begin{center}
\begin{tabular}{| r | r | c | r | r | r |}
\hline
$\alpha$ & $\beta$ & $p$ & training & test data\\
\hline
\hline
0	    & 0	    &    -  & 0.5541	& 0.5716\\
0.01	& 0	    &   $I_{16}$    & 0.5650		& 0.5683\\
\rowcolor{lightgray} 0.01	& 	0    &  $p_{\lambda_{6,16}}$   & 0.7722	& {\color{red}  0.7657}\\
0	& 0.05    &  -     & 0.9771		& 0.9729\\
0.01	& 0.05	&  $I_{16}$ & 0.9849	& 0.9802\\
\rowcolor{lightgray}0.01	&  0.05	& $p_{\lambda_{6,16}}$   & 0.9857   &  {\color{red} 0.9833}\\
\hline
\end{tabular}
\end{center}
\vspace{0.5em}	
\caption{Classification results with different parameter choices. The standard inbuilt Tikhonov regularization ($\ell_2$-norm of $\theta$) is weighted by $\beta$. For $\alpha>0$ the feature transformations $\{T_j\}_{j=1}^{16}$ are used in the loss function, either directly or weighted by a random projection $p_{\lambda_{6,16}}$. The accuracy of the model is measured by the number of correctly classified samples divided by the number of all samples.}\label{table:1}
\end{table}

\subsection{Results}
Table \ref{table:1} shows that no regularization and no features provide the poorest results. It seems that adding features with random projections have a regularizing effect and improve the results significantly. As expected, it is important to include Tikhonov regularization on $\theta$. Further enhancement happens by adding features via the modified augmented target loss function with or without additional weighting from projections. All results are very stable and are generalizing very well from training to the independent test set, see \cite{:la} for further details.

\section{Acknowledgment}
This work was partially funded by the Vienna Science and Technology Fund (WWTF) through project VRG12-009, by WWTF AugUniWien/FA746A0249, by International Mobility of Researchers (CZ.02.2.69/0.0/0.0/16 027/0008371), and by project LO1401. For the research, infrastructure of the SIX Center was used.

%%%%%%%%%%%%%%%%%%%%%%%%%%%%
\newpage
\appendix

\section{Proof of Theorem \ref{theorem}}\label{app:2}
\subsection{Proof of \eqref{thm1} in Theorem \ref{theorem}}
For $\{y_i\}_{i=1}^M\subset\R^d$ and $p\in\mathcal{G}_{k,d}$, we define
\begin{equation}\label{eq:f for all}
f(p,\{y_i\}_{i=1}^{M}) :=  \tfrac{1}{M} \sum_{i=1}^M \tfrac{d}{k}  \|p(y_i)\|^2. 
\end{equation}
Given two sets, $\{y_i\}_{i=1}^{M_1}, \{z_j\}_{j=1}^{M_2}\subset \mathbb{R}^d$, suppose that $P\in\mathcal{G}_{k,d}$ is a random matrix, distributed according to a cubature measure of strength at least $2$. The covariance is given by
\begin{align*}
\Cov(f(P,\{y_i\}_{i=1}^{M_1}) , f(P,\{z_j\}_{j=1}^{M_2})) =& \\
&\hspace{-3.5cm}  \EE[(f(P,\{y_i\}) - \EE[f(P,\{y_i\})])(f(P,\{z_i\}) - \EE[f(P,\{z_i\})]
\end{align*}
Using the identity, cf. ~\cite{Bachoc:2010aa}, 
\begin{align}
 \tfrac{d}{k}  \EE \big[  \norm{Py}^2 \big] &= \norm{y}^2 ,\label{exp1}
\end{align}
directly yields
\begin{align*}
\Cov(f(P,\{y_i\}_{i=1}^{M_1}) , f(P,\{z_j\}_{j=1}^{M_2})) =& \\ 
&\hspace{-3.5cm} \EE[(\tfrac{1}{M_1} \sum_{i=1}^{M_1} \tfrac{d}{k}  \|P(y_i)\|^2 - \tfrac{1}{M_1} \sum_{i=1}^{M_1} \|y_i\|^2)(\tfrac{1}{M_2} \sum_{i=1}^{M_2} \tfrac{d}{k}  \|P(z_i)\|^2 - \tfrac{1}{M_2} \sum_{i=1}^{M_2} \|z_i\|^2)]
\end{align*}
Following \cite[Theorem 2.4, Section 3.1]{Graf:2014qd} we use that 
\begin{align}
\EE \big[ \norm{Py}^2 \norm{Pz}^2]& = \frac{1}{q} ( \alpha_1 \norm{y}^2 \norm{z}^2 + \alpha_2 \langle y,z \rangle^2), \quad y,z\in\R^d,\label{exp2} 
\end{align}
holds, where $q = (d-1)d(d+2)$, $\alpha_1 = (d+1)k^2 -2k$ and $\alpha_2 = 2k(d-k)$. This leads to the explicit formula of the population covariance
\begin{equation}\label{eq:cov universal}
\begin{split}
\Cov(f(P,\{y_i\}_{i=1}^{M_1}) , f(P,\{z_j\}_{j=1}^{M_2})) =& \\
 &\hspace{-3.5cm} \frac{a_{k,d}}{M_1M_2}\sum_{i=1}^{M_1}\sum_{j=1}^{M_2} \langle y_i,z_j\rangle^2-\frac{a_{k,d}}{d} \Big(\frac{1}{M_1} \sum_{i=1}^{M_1} \norm{y_i}^2\Big)\Big(\frac{1}{M_2} \sum_{j=1}^{M_2} \norm{z_j}^2\Big),
\end{split}
\end{equation}
with $a_{k,d}=  \tfrac{2d(d-k)}{k(d-1)(d+2)}$. \smallskip \\ 
\noindent
For $y:=\{y_i\}_{i=1}^M\subset\R^d\setminus\{0\}$ we set $\hat{y}_i:=\frac{y_i}{\|y_i\|}$, for $i=1,\ldots,M$. The identity \eqref{eq:cov universal} enables us to compute the population correlation 
\begin{equation}\label{eq:quotient for corr}
 \Corr(f(P,y),f(P,\hat{y})) = \frac{\Cov(f(P,y) , f(P,\hat{y}))}{\sqrt{\Var(f(P,y))} \sqrt{\Var(f(P,\hat{y}))}}
 \end{equation}
by the explicit formulas
\begin{align*}
\Cov\big[f(P,y) , f(P,\hat{y})\big] &= \frac{a_{k,d}}{M^2}\sum_{i,j=1}^M \langle y_i,\hat{y}_j\rangle^2-\frac{a_{k,d}}{d} \cdot \frac{1}{M} \sum_{i=1}^M \norm{y_i}^2 \\
 \Cov\big[f(P,y) , f(P,y)\big] &= \Var[f(P,y)]  = \frac{a_{k,d}}{M^2} \sum_{i,j=1}^M \langle y_i, y_j \rangle^2-\frac{a_{k,d}}{d}  \Big(\frac{1}{M} \sum_{i=1}^M \|y_i\|^2\Big)^2\\
\Cov\big[f(P,\hat{y}) , f(P,\hat{y})\big] &= \Var[f(P,\hat{y})] =  \frac{a_{k,d}}{M^2} \sum_{i,j=1}^M \langle \hat{y}_i, \hat{y}_j \rangle^2-\frac{a_{k,d}}{d}.
\end{align*}

Since the variance is always nonnegative and $\tfrac{a_{k,d}}{d} > 0$, the denumerator of $\Corr(f(P,y),f(P,\hat{y}))$ in \eqref{eq:quotient for corr} satisfies 
\begin{align*}
\sqrt{\Var(f(P,y))} \sqrt{\Var(f(P,\hat{y}))} & \leq \sqrt{\Big(\frac{a_{k,d}}{M^2} \sum_{i,j=1}^M \langle y_i, y_j \rangle^2 \Big) \Big( \frac{a_{k,d}}{M^2} \sum_{i,j=1}^M \langle \hat{y}_i, \hat{y}_j \rangle^2\Big)}\\
& \leq \frac{a_{k,d}}{M^2}  \sqrt{  \Big(\sum_{i,j=1}^M \langle y_i, y_j \rangle^2 \Big) \Big( \frac{1}{\min_i(\|y_i\|)^4} \sum_{i,j=1}^M \langle y_i, y_j \rangle^2\Big)}\\
& \leq \frac{1}{\min_i(\|y_i\|)^2 }  \frac{a_{k,d}}{M^2} \sum_{i,j=1}^M \langle y_i, y_j \rangle^2 .
\end{align*}
The enumerator of $\Corr(f(P,y),f(P,\hat{y}))$ in \eqref{eq:quotient for corr} is estimated by
\begin{align*}
\Cov(f(P,y) , f(P,\hat{y})) &\geq   \frac{a_{k,d}}{\max_i(\|y_i\|)^2}\frac{1}{M^2}\sum_{i,j=1}^M \langle y_i,y_j\rangle^2-\frac{a_{k,d}}{d} \max_i(\|y_i\|)^2 
\end{align*}
For $d\geq M$, a short calculation yields $\Cov(f(P,y) , f(P,\hat{y}))\geq 0$, so that we obtain
\begin{align*}
\Corr(f(P,y),f(P,\hat{y}))  &\geq  \frac{\min_i(\|y_i\|)^2}{\max_i(\|y_i\|)^2}-\frac{\min_i(\|y_i\|)^2 \max_i(\|y_i\|)^2}{ \frac{d}{M^2} \sum_{i,j=1}^M \langle y_i, y_j \rangle^2}. 
\intertext{The lower bound $\sum_{i,j=1}^M \langle y_i,y_j\rangle^2 \geq M\min_i(\|y_i\|)^4$ yields}
\Corr(f(P,y),f(P,\hat{y})) &\geq \frac{\min_i(\|y_i\|)^2}{\max_i(\|y_i\|)^2}- \frac{M}{d}\cdot\frac{\max_i(\|y_i\|)^2}{\min_i(\|y_i\|)^2}.
\end{align*}
Since the correlation is scaling invariant the choice $y=\{x_i-x_j:1\leq i<j\leq m\}$ with $M=\frac{m(m-1)}{2}$ implies \eqref{thm1} in Theorem \ref{theorem}. Incorporating the correct scaling yields the following corollary: 

\begin{corollary}\label{covariances}
For a given data set $x = \{x_i\}_{i=1}^{m}$ and for random $P\in\G_{k,d}$ the (co)variances of $\tvar(Px)$ \eqref{tvarpx} and $\mathcal{M}(P,x)$ \eqref{mpx} are given by
\small{ \begin{align*}
\Cov(\mathcal{M}(P,x),\tvar(Px)) &= \frac{k}{2d} \Big( \frac{a_{k,d}}{M^2}\sum_{i<j}\sum_{l<r} \big \langle x_i-x_j,\frac{x_l-x_r}{\|x_l-x_r\|} \big \rangle^2-\frac{a_{k,d}}{d} \cdot \frac{1}{M} \sum_{i<j} \norm{x_i-x_j}^2 \Big),\\
\Var(\tvar(Px)) &=  \frac{k^2}{4d^2} \Big(\frac{a_{k,d}}{M^2} \sum_{i<j} \sum_{l<r} \big \langle x_i-x_j, x_l-x_r \big \rangle^2-\frac{a_{k,d}}{d}  \big(\frac{1}{M} \sum_{i<j} \|x_i - x_j\|^2 \big)^2 \Big),\\
\Var(\mathcal{M}(P,x)) & = \frac{a_{k,d}}{M^2} \sum_{i<j}\sum_{l<r} \big \langle \frac{x_i-x_j}{\|x_i-x_j\|}, \frac{x_l-x_r}{\|x_l-x_r\|} \big \rangle^2-\frac{a_{k,d}}{d},
\end{align*} }
where $M=\frac{m(m-1)}{2}$ and $a_{k,d} =  \frac{2d(d-k)}{k(d-1)(d+2)}$. 
\end{corollary}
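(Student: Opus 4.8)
\emph{Proof proposal.} The plan is to deduce all three formulas from the single universal covariance identity \eqref{eq:cov universal} for the functional $f$ from \eqref{eq:f for all}, by exhibiting both $\tvar(Px)$ and $\mathcal{M}(P,x)$ as values of $f$ on the family of pairwise difference vectors. Set $M=\tfrac{m(m-1)}{2}$, let $y$ denote the family $\{x_i-x_j:1\le i<j\le m\}$ and $\hat y$ the family of normalized differences $\{(x_i-x_j)/\norm{x_i-x_j}:1\le i<j\le m\}$, each consisting of $M$ vectors. Since $p$ is linear, $\norm{p(x_i)-p(x_j)}^2=\norm{p(x_i-x_j)}^2$, so comparing with \eqref{tvarpx} and \eqref{mpx} one checks directly that
\begin{equation*}
\tvar(Px)=\tfrac{k}{2d}\,f(P,y),\qquad \mathcal{M}(P,x)=f(P,\hat y),
\end{equation*}
where $f$ and the underlying distribution on $\G_{k,d}$ (a cubature measure of strength at least $2$) are exactly those of \eqref{eq:cov universal}.

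Next I would use bilinearity of covariance to pull the deterministic scalar out front: $\Cov(\mathcal{M}(P,x),\tvar(Px))=\tfrac{k}{2d}\Cov(f(P,\hat y),f(P,y))$, $\Var(\tvar(Px))=\tfrac{k^2}{4d^2}\Var(f(P,y))$, and $\Var(\mathcal{M}(P,x))=\Var(f(P,\hat y))$. Then substitute into \eqref{eq:cov universal} with $M_1=M_2=M$ and with the two index sets ranging over $\{y,\hat y\}$. The simplification uses only that $\norm{\hat y_{ij}}=1$, so that $\tfrac1M\sum_{i<j}\norm{\hat y_{ij}}^2=1$, whereas $\tfrac1M\sum_{i<j}\norm{y_{ij}}^2=\tfrac1M\sum_{i<j}\norm{x_i-x_j}^2$; moreover the squared inner products $\langle\hat y_{ij},y_{lr}\rangle^2$ are precisely $\langle x_i-x_j,(x_l-x_r)/\norm{x_l-x_r}\rangle^2$, and by symmetry of $\langle\cdot,\cdot\rangle^2$ the two index pairs may be relabeled to match the statement. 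Collecting terms reproduces the three displayed formulas in Corollary~\ref{covariances}.

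I expect no genuine difficulty here beyond careful bookkeeping of normalizations: the one place an error could slip in is the passage from the $\tfrac1{m(m-1)}$ prefactor in \eqref{tvarpx}--\eqref{mpx} together with the $\tfrac dk$-weighting built into $f$ in \eqref{eq:f for all}, to the factors $\tfrac{k}{2d}$ and $\tfrac{k^2}{4d^2}$ above; one should verify that scaling once directly. Everything else is a routine substitution into \eqref{eq:cov universal}.
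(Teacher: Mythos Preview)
Your proposal is correct and matches the paper's own argument essentially line for line: the paper derives the explicit formulas for $\Cov(f(P,y),f(P,\hat y))$, $\Var(f(P,y))$, $\Var(f(P,\hat y))$ from \eqref{eq:cov universal}, then states that the corollary follows by choosing $y=\{x_i-x_j:i<j\}$ and ``incorporating the correct scaling'', which is precisely your identification $\tvar(Px)=\tfrac{k}{2d}f(P,y)$ and $\mathcal{M}(P,x)=f(P,\hat y)$ together with bilinearity of covariance.
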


\subsection{Proof of the second part of Theorem \ref{theorem}}
For fixed parameters $\mu > 0,\sigma^2> 0$, that do not depend on $d$, let $Y_1\in\R^d$ be a random vector, whose squared entries are independent, identically distributed with mean $\EE Y_{1,l}^2=\mu$ and variance $\Var(Y_{1,l}^2)=\sigma^2$, for $l=1,\ldots,d$. We immediately observe
\begin{equation*}
\EE \Big(\frac{\|Y_{1}\|^2}{\sqrt{d}} \Big)= \sqrt{d}\mu,\qquad \Var\Big(\frac{\|Y_1\|^2}{\sqrt{d}}\Big) = \sigma^2.
\end{equation*}
For any $c>0$, Chebychev's inequality yields
\begin{equation*}
\mathbb{P}\Big(\Big|\frac{\|Y_{1}\|^2}{\sqrt{d}}-\sqrt{d}\mu  \Big|\geq c \sigma\Big)\leq \frac{1}{c^2}.
\end{equation*}
Suppose that $Y_2,\ldots,Y_M$ are copies of $Y_1$, not necessarily independent. Then the union bound
\begin{equation*}
\mathbb{P}\Big(\Big|\frac{\|Y_{i}\|^2}{\sqrt{d}}-\sqrt{d}\mu  \Big|\geq c \sigma, \;\text{for some $i=1,\ldots,M$}\Big)\leq \frac{M}{c^2}
\end{equation*}
implies that 
\begin{equation*}
\sqrt{d}\mu-c\sigma \leq \frac{\min_i(\|Y_i\|)^2}{\sqrt{d}}\leq \frac{\max_i(\|Y_i\|)^2}{\sqrt{d}}\leq \sqrt{d}\mu+c\sigma
\end{equation*}
holds with probability at least $1-\frac{M}{c^2}$. Provided that $\sqrt{d}\mu\neq c\sigma$ and $0 < \sqrt{d} \mu - c \sigma$, we deduce
\begin{equation*}
\frac{\sqrt{d}\mu-c\sigma}{\sqrt{d}\mu+c\sigma}\leq \frac{\min_i(\|Y_i\|)^2}{\max_i(\|Y_i\|)^2}\leq \frac{\sqrt{d}\mu+c\sigma}{\sqrt{d}\mu-c\sigma}.
\end{equation*}
We can choose $c = \tfrac{\mu}{\sigma} \sqrt[4]{d}$, since $0 < c \leq \tfrac{\sqrt[4]{d} \mu}{\sigma} \leq \tfrac{\sqrt{d} \mu}{\sigma}$. That directly yields
\begin{equation*}
\frac{1-\frac{1}{\sqrt[4]{d}}}{1+\frac{1}{\sqrt[4]{d}}}\leq\frac{\min_i(\|Y_i\|)^2}{\max_i(\|Y_i\|)^2}\leq\frac{1+\frac{1}{\sqrt[4]{d}}}{1-\frac{1}{\sqrt[4]{d}}}
\end{equation*}
holds with probability at least $1-\frac{\mu^2 M}{\sigma^2 \sqrt{d}}$. \\
It follows directly that $\frac{\min_i(\|Y_i\|)^2}{\max_i(\|Y_i\|)^2}$ converges towards $1$ in probability for $d\rightarrow\infty$,
\\
The choice $\{Y_1,\ldots,Y_M\}=\{X_i-X_j:1\leq i<j\leq m\}$ implies the second part of Theorem \ref{theorem}.

\subsection{Calculations for population covariances} \label{tdes2}
We notice that $\|p(x_i - x_j)\|^2=\trace(px_ix_i^\top - px_jx_j^\top)$ is a polynomial of degree $1$ in $p$. Hence, $\tvar(px)$ in \eqref{tvarpx} is also a polynomial of degree $1$ in $p$. If $\{p_l\}_{l=1}^n$ is a $1$-design, then the sample mean of $\{\tvar(p_1x),\ldots,\tvar(p_nx)\}$ satisfies
\begin{equation*}
\frac{1}{n}\sum_{l=1}^n \tvar(p_lx) = \mathbb{E}\tvar(Px),
\end{equation*}
which is the population mean of $\tvar(Px)$, with $P\sim\lambda_{k,d}$. Similarly, the term $\|p(x_i - x_j)\|^4$ is a polynomial of degree $2$ in $p$, so that $(\mathcal{M}(p,x))^2$ in \eqref{mpx} is a polynomial of degree $2$ in $p$. If $\{p_l\}_{l=1}^n$ is a $2$-design, then we derive
\begin{align*}
\sum_{l=1}^n (\mathcal{M}(p_l,x))^2- \Big(\sum_{j=1}^n\mathcal{M}(p_l,x)\Big)^2 = \mathbb{E}(\mathcal{M}(P,x))^2-\mathbb{E} \Big(\sum_{j=1}^n\mathcal{M}(P,x)\Big)^2,
\end{align*}
with $P\sim\lambda_{k,d}$. In other words, the sample variance of $\{\mathcal{M}(p_1,x),\ldots,\mathcal{M}(p_n,x)\}$ coincides with the population variance $\Var(\mathcal{M}(P,x))$. Analogously, we deduce that the sample covariance of \eqref{eq:two quant} coincides with the population covariance $\Cov(\mathcal{M}(P,x),\tvar(Px))$ with $P\sim \lambda_{k,d}$.

%%%%%%%%%%%%%%%%%%%%%%%%%%%%%%%%%%%%%%%%%%%%%%%%%%%%%%%%%%%%%%%%%%%%%%%%%%%%%%
\bibliographystyle{amsplain}
\bibliography{biblio_ehler3}

\providecommand{\bysame}{\leavevmode\hbox to3em{\hrulefill}\thinspace}
\providecommand{\MR}{\relax\ifhmode\unskip\space\fi MR }
% \MRhref is called by the amsart/book/proc definition of \MR.
\providecommand{\MRhref}[2]{%
  \href{http://www.ams.org/mathscinet-getitem?mr=#1}{#2}
}
\providecommand{\href}[2]{#2}
\begin{thebibliography}{10}

\bibitem{Achlioptas:2003wo}
D.~Achlioptas, \emph{Database-friendly random projections:
  {J}ohnson-{L}indenstrauss with binary coins}, Journal of Computer and System
  Sciences \textbf{66} (2003), no.~4, 671--687.

\bibitem{Bachoc:2010aa}
C.~Bachoc and M.~Ehler, \emph{Tight $p$-fusion frames}, Appl.~ Comput.~
  Harmon.~ Anal. \textbf{35} (2013), no.~1, 1--15.

\bibitem{sox}
C.~Bagwell, \emph{{SoX - Sound Exchange} the swiss army knife of sound
  processing.}, https://launchpad.net/ubuntu/+source/sox/14.4.1-5, Accessed:
  2018-10-31.

\bibitem{Ball:1997dw}
K.~Ball, \emph{An elemntary introduction to modern convex geometry}, Flavors in
  Geometry \textbf{31} (1997), 1--58.

\bibitem{Baraniuk:2008fk}
R.~Baraniuk, M.~Davenport, R.~DeVore, and M.~Wakin, \emph{A simple proof of the
  restricted isometry property for random matrices}, Constr.~ Approx.
  \textbf{28} (2008), no.~3, 253--263.

\bibitem{Baraniuk:2006aa}
R.~G. Baraniuk and M.~B. Wakin, \emph{Random projections of smooth manifolds},
  Foundations of Computational Mathematics, vol.~9, 2006, pp.~941--944.

\bibitem{Bingham:2001:RPD:502512.502546}
Ella Bingham and Heikki Mannila, \emph{Random projection in dimensionality
  reduction: Applications to image and text data}, Proceedings of the Seventh
  ACM SIGKDD International Conference on Knowledge Discovery and Data Mining
  (New York, NY, USA), KDD '01, ACM, 2001, pp.~245--250.

\bibitem{Graf:2014qd}
B.~Bodman, M.~Ehler, and M.~Gr{\"a}f, \emph{From low to high-dimensional
  moments without magic}, J.~ Theor.~ Probab. (2017).

\bibitem{A.-Breger:2017bq}
A.~Breger, M.~Ehler, H.~Bogunovic, S.~M. Waldstein, A.~Philip,
  U.~Schmidt-Erfurth, and B.~S. Gerendas, \emph{Supervised learning and
  dimension reduction techniques for quantification of retinal fluid in optical
  coherence tomography images}, Eye, Springer Nature (2017).

\bibitem{Breger:2016vn}
A.~Breger, M.~Ehler, and M.~Gr\"af, \emph{{Q}uasi {M}onte {C}arlo integration
  and kernel-based function approximation on {G}rassmannians}, Frames and Other
  Bases in Abstract and Function Spaces, Applied and Numerical Harmonic
  Analysis series (ANHA), Birkhauser/Springer (2017).

\bibitem{Breger:2016rc}
A.~Breger, M.~Ehler, and M.~Gr\"af, \emph{Points on manifolds with
  asymptotically optimal covering radius}, Journal of Complexity \textbf{48}
  (2018), 1--14.

\bibitem{Candes:2005vn}
E.~J. Cand\`es and T.~Tao, \emph{Decoding by linear programming}, IEEE Trans.~
  Inform.~ Theory \textbf{51} (2005), no.~12, 4203--4215.

\bibitem{Caruana1997}
R.~Caruana, \emph{Multitask learning}, Machine Learning \textbf{28} (1997),
  no.~1, 41--75.

\bibitem{Dasgupta:2003fk}
S.~Dasgupta and A.~Gupta, \emph{An elementary proof of a theorem of {J}ohnson
  and {L}indenstrauss}, Random Structures \& Algorithms \textbf{22} (2003),
  no.~1, 60--65.

\bibitem{davis2006relationship}
J.~Davis and M.~Goadrich, \emph{The relationship between precision-recall and
  roc curves}, Proceedings of the 23rd International Conference on Machine
  Learning (New York, NY, USA), ICML '06, ACM, 2006, pp.~233--240.

\bibitem{Harpe:2005fk}
P.~de~la Harpe and C.~Pache, \emph{Cubature formulas, geometrical designs,
  reproducing kernels, and {M}arkov operators}, Infinite groups: geometric,
  combinatorial and dynamical aspects (Basel), vol. 248, Birkh\"auser, 2005,
  pp.~219--267.

\bibitem{badogr17}
M.~D{\"o}rfler, R.~Bammer, and T.~Grill, \emph{Inside the spectrogram:
  Convolutional neural networks in audio processing}, IEEE International
  Conference on Sampling Theory and Applications (SampTA) (2017), 152--155.

\bibitem{Dorfler:2018xz}
M.~D\"orfler, T.~Grill, R.~Bammer, and A.~Flexer, \emph{Basic filters for
  convolutional neural networks applied to music: Training or design}, Neural
  Comput. \& Applications (2018), 1--14.

\bibitem{Dosovitskiy:2016:GIP:3157096.3157170}
Alexey Dosovitskiy and Thomas Brox, \emph{Generating images with perceptual
  similarity metrics based on deep networks}, Proceedings of the 30th
  International Conference on Neural Information Processing Systems (USA),
  NIPS'16, Curran Associates Inc., 2016, pp.~658--666.

\bibitem{librosa0.6.2}
B.~McFee et~al., \emph{Librosa: 0.6.2}, https://doi.org/10.5281/zenodo.1342708,
  2018.

\bibitem{chollet2015keras}
F.~Chollet et~al., \emph{Keras}, https://keras.io, 2015.

\bibitem{Etayo:2016qq}
U.~Etayo, J.~Marzo, and J.~Ortega-Cerd{\`a}, \emph{Asymptotically optimal
  designs on compact algebraic manifolds}, J.~ Monatsh.~ Math. \textbf{186}
  (2018), no.~2, 235--248.

\bibitem{frangi}
A.F. Frangi, W.J. Niessen, K.L. Vincken, and M.A. Viergever, \emph{Multiscale
  vessel enhancement filtering}, Lecture Notes in Computer Science
  \textbf{1496} (1998).

\bibitem{semantic}
Pierre-Antoine Ganaye, Micha{\"e}l Sdika, and Hugues Benoit-Cattin,
  \emph{Semi-supervised learning for segmentation under semantic constraint:
  21st international conference, granada, spain, september 16-20, 2018,
  proceedings, part iii}, pp.~595--602, 09 2018.

\bibitem{octbio}
B.S. Gerendas, X.~Hu, A.~Kaider, A.~Montuoro, A.~Sadeghipour, S.M. Waldstein,
  and U.~Schmidt-Erfurth, \emph{Oct biomarkers predictive for visual acuity in
  patients with diabetic macular edema}, Investigative Ophthalmology \& Visual
  Science \textbf{58} (2017), no.~8, 2026--2026.

\bibitem{Golub:1996fk}
G.~H. Golub and C.~F.~Van Loan, \emph{Matrix computations}, Johns Hopkins
  Studies in the Mathematical Sciences, The Johns Hopkins University Press,
  1996.

\bibitem{:la}
P.~Harar, \emph{Orthovar}, https://gitlab.com/hararticles/orthovar, 2018.

\bibitem{subclus}
Reinhard Heckel, Michael Tschannen, and Helmut B{\"o}lcskei,
  \emph{Dimensionality-reduced subspace clustering}, Information and Inference:
  A Journal of the IMA \textbf{6} (2017).

\bibitem{numax}
C.~Hedge, A.~C. Sankaranarayanan, W.~Yin, and R.~G. Baraniuk, \emph{Numax: A
  convex approach for learning near-isometric linear embeddings}, IEEE
  Transactions on Signal Processing \textbf{83} (2015).

\bibitem{Johnson2016PerceptualLF}
Justin Johnson, Alexandre Alahi, and Li~Fei-Fei, \emph{Perceptual losses for
  real-time style transfer and super-resolution}, ECCV, 2016.

\bibitem{comprealrand2}
J.~R. Karr and T.~E. Martin, \emph{Random numbers and principal components:
  further searches for the unicorn}, Tech. report, United States Forest Service
  General Technical Report, 1981.

\bibitem{Krahmer:2011kx}
F.~Krahmer and R.~Ward, \emph{New and improved {J}ohnson {L}indenstrauss
  embeddings via the restricted isometry property}, SIAM Journal on
  Mathematical Analysis \textbf{43} (2011), no.~3, 1269--1281.

\bibitem{shears}
G.~Kutyniok, W.-Q. Lim, and R.~Reisenhofer, \emph{Shearlab 3d: Faithful digital
  shearlet transforms based on compactly supported shearlets}, ACM Trans. Math.
  Softw. \textbf{42} (2016, www.shearlab.org).

\bibitem{Ledig2017PhotoRealisticSI}
Christian Ledig, Lucas Theis, Ferenc Huszar, Jose Caballero, Andrew~P. Aitken,
  Alykhan Tejani, Johannes Totz, Zehan Wang, and Wenzhe Shi,
  \emph{Photo-realistic single image super-resolution using a generative
  adversarial network}, 2017 IEEE Conference on Computer Vision and Pattern
  Recognition (CVPR) (2017), 105--114.

\bibitem{Linial1995}
Nathan Linial, Eran London, and Yuri Rabinovich, \emph{The geometry of graphs
  and some of its algorithmic applications}, Combinatorica \textbf{15} (1995),
  no.~2, 215--245.

\bibitem{FullyConvolutional}
J.~Long, E.~Shelhamer, and T.~Darrell, \emph{Fully convolutional networks for
  semantic segmentation}, Proceedings of the IEEE conference on computer vision
  and pattern recognition, 2015, pp.~3431--3440.

\bibitem{MAL-035}
Michael~W. Mahoney, \emph{Randomized algorithms for matrices and data},
  Foundations and Trends{\textregistered} in Machine Learning \textbf{3}
  (2011), no.~2, 123--224.

\bibitem{Matousek:2008al}
J.~Matousek, \emph{On variants of the {J}ohnson-{L}indenstrauss lemma}, Random
  Structures \& Algorithms \textbf{33} (2008), no.~2, 142--156.

\bibitem{pca}
B.~Moore, \emph{Principal component analysis in linear systems:
  Controllability, observability, and model reduction}, IEEE Transactions on
  Automatic Control \textbf{26} (1981).

\bibitem{SoxSilenceTut}
J.~Navarrete, \emph{The sox of silence},
  https://digitalcardboard.com/blog/2009/08/25/the-sox-of-silence, 2009.

\bibitem{Neumayer:2019sy}
S.~Neumayer, M.~Nimmer, S.~Setzer, and G.~Steidl, \emph{On the robust {PCA} and
  {W}eiszfeld's algorithm}, Appl.~ Math.~ Optim. (2019).

\bibitem{acnn}
Ozan Oktay, Enzo Ferrante, Konstantinos Kamnitsas, Mattias Heinrich, Wenjia
  Bai, Jose Caballero, Ricardo Guerrero, Stuart A~Cook, Antonio de~Marvao,
  Timothy Dawes, Declan O'Regan, Bernhard Kainz, Ben Glocker, and Daniel
  Rueckert, \emph{Anatomically constrained neural networks (acnn): Application
  to cardiac image enhancement and segmentation}, IEEE Transactions on Medical
  Imaging \textbf{PP} (2017).

\bibitem{Pabst:1999kr}
G.~Pabst, \emph{Parameters for compartment-free pharmacokinetics -
  standardisation of study design, data analysis and reporting}, ch.~5. Area
  under the concentration-time curve, pp.~65--80, Shaker Verlag, 1999.

\bibitem{GoodSounds}
O.~Romani Picas, H.~Parra Rodriguez, D.Dabiri, H.Tokuda, W.~Hariya, K.~Oishi,
  and X.~Serra, \emph{A real-time system for measuring sound goodness in
  instrumental sounds}, Audio Engineering Society Convention 138, Audio
  Engineering Society, 2015.

\bibitem{unet}
O.~Ronneberger, P.~Fischer, and T.~Brox, \emph{U-net: Convolutional networks
  for biomedical image segmentation}, arxive:1505.04597 (2015).

\bibitem{Seymour:1984bh}
P.~Seymour and T.~Zaslavsky, \emph{Averaging sets: a generalization of mean
  values and spherical designs}, Advances in Math. \textbf{52} (1984),
  213--240.

\bibitem{vgg}
Karen Simonyan and Andrew Zisserman, \emph{Very deep convolutional networks for
  large-scale image recognition}, arXiv 1409.1556 (2014).

\bibitem{comprealrand}
Dean~F. Stauffer, Edward~O. Garton, and R.~Kirk Steinhorst, \emph{A comparison
  of principal components from real and random data}, Ecology \textbf{66}
  (1985), no.~6, 1693--1698.

\bibitem{Stewart2017LabelFreeSO}
Russell Stewart and Stefano Ermon, \emph{Label-free supervision of neural
  networks with physics and domain knowledge}, AAAI, 2017.

\bibitem{Thanei2017}
Gian-Andrea Thanei, Christina Heinze, and Nicolai Meinshausen, \emph{Random
  projections for large-scale regression}, pp.~51--68, Springer International
  Publishing, Cham, 2017.

\bibitem{udell2015thesis}
M.~Udell, \emph{Generalized low rank models}, Ph.D. thesis, Stanford
  University, 2015.

\bibitem{Shahrian}
E.~S. Varnousfaderani, J.~Wu, W.-D. Vogl, A.-M. Philip, A.~Montuoro,
  R.~Leitner, C.~Simader, S.~M. Waldstein, B.~S. Gerendas, and
  U.~Schmidt-Erfurth, \emph{A novel benchmark model for intelligent annotation
  of spectral-domain optical coherence tomography scans using the example of
  cyst annotation}, Computer Methods and Programs in Biomedicine \textbf{130}
  (2016), 93--105.

\bibitem{VERAART2016394}
Jelle Veraart, Dmitry~S. Novikov, Daan Christiaens, Benjamin Ades-aron, Jan
  Sijbers, and Els Fieremans, \emph{Denoising of diffusion mri using random
  matrix theory}, NeuroImage \textbf{142} (2016), 394 -- 406.

\bibitem{Vershynin:2012fk}
R.~Vershynin, \emph{Introduction to the non-asymptotic analysis of random
  matrices}, Compressed sensing, Theory and Applications (Y.~Eldar and
  G.~Kutyniok, eds.), Cambridge University Press, 2012, pp.~210--268.

\bibitem{von1961classification}
E.M. von Hornbostel and C.~Sachs, \emph{Classification of musical instruments:
  Translated from the original german by anthony baines and klaus p.
  wachsmann}, The Galpin Society Journal (1961), 3--29.

\bibitem{Yosinski:2014:TFD:2969033.2969197}
Jason Yosinski, Jeff Clune, Yoshua Bengio, and Hod Lipson, \emph{How
  transferable are features in deep neural networks?}, Proceedings of the 27th
  International Conference on Neural Information Processing Systems - Volume 2
  (Cambridge, MA, USA), NIPS'14, MIT Press, 2014, pp.~3320--3328.

\bibitem{4798177}
L.~{Zhang}, R.~{Lukac}, X.~{Wu}, and D.~{Zhang}, \emph{Pca-based spatially
  adaptive denoising of cfa images for single-sensor digital cameras}, IEEE
  Transactions on Image Processing \textbf{18} (2009), no.~4, 797--812.

\bibitem{Zhang:2018th}
Z.~Zhang and M.~Sabuncu, \emph{Generalized cross entropy loss for training deep
  neural networks with noisy labels}, Advances in Neural Information Processing
  Systems (NIPS) \textbf{31} (2018).

\bibitem{FrequencyRanges}
Inc. ZyTrax, \emph{Frequency ranges},
  http://www.zytrax.com/tech/audio/audio.html, 2018.

\end{thebibliography}
\end{document}